\documentclass[11pt, reqno]{amsart}
\usepackage{fullpage}
\usepackage{tvhpreamble}

\usepackage[scr=boondoxo]{mathalpha}
\usepackage{graphicx} 
\newcommand{\WF}{\mathrm{WF}}
\newcommand{\scS}{\prescript{\rm{sc}}{}{S}}
\newcommand{\scWF}{\prescript{\rm{sc}}{}{\WF}}
\newcommand{\cWF}{\prescript{\rm{c}}{}{\WF}}
\newcommand{\FWF}{\prescript{\F}{}{\WF}}
\newcommand{\oscWF}{\prescript{\rm{osc}}{}{\WF}}
\newcommand{\oscWFt}{\prescript{\rm{osc}}{}{\widetilde{\WF}}}
\newcommand{\cNDelta}{\prescript{\rm{c}}{}{N_\Delta}}
\newcommand{\oscNDelta}{\prescript{\rm{osc}}{}{N_\Delta}}
\newcommand{\FWFt}{\prescript{\F}{}{\widetilde{\WF}}}
\newcommand{\resizemid}{\;\middle\vert\;}
\newcommand{\tauWF}{\prescript{\tau}{}{\WF}}
\DeclareMathOperator{\Op}{\mathrm{Op}}

\newcommand{\D}{\mathscr{D}}
\newcommand{\Ddir}{\mathscr{D}^{\rm{dir}}}
\newcommand{\sP}{\mathsf{P}}
\DeclareMathOperator{\Char}{\mathrm{Char}}
\title[Global Wavefront Sets and Paraproducts]{A Global Wavefront Set Condition for Defining Bony's Paraproduct Decomposition}

\author[J. Mott]{Josh Mott} 
\address[J. Mott]{Department of Mathematics, University of North Carolina, Chapel Hill, NC, 27599}
\email{jrmott@unc.edu}
\author[T. Van Hoose]{Tim Van Hoose} \address[T. Van Hoose]{Department of Mathematics, University of North Carolina, Chapel Hill, NC, 27599}
\email{tvh@unc.edu}
\begin{document}
\begin{abstract}
    We establish a natural generalization of the H\"ormander condition for the multiplication of Besov distributions using the global wavefront set framework of Coriasco, Johansson, and Toft. Using this condition, we relate the microlocal perspective on multiplication of distributions to the classical paraproduct decomposition of pointwise multiplication arising in the paradifferential calculus of Bony.
\end{abstract}
\maketitle
\section{Introduction}

     In the context of low regularity nonlinear PDEs, a natural question which arises is whether one can define a product of two irregular objects. The classical Schwartz impossibility theorem provides an obstruction: there is no multiplication structure on the space of Schwartz distributions that naturally extends the product of continuous functions \cite{schwartzLimpossibiliteMultiplicationDistributions1954}. Many resolutions have been proposed to overcome this challenge, including H\"ormander's development of wavefront sets for tracking singularities of distributions \cite{hormanderAnalysisLinearPartial2007} and Bony's paradifferential calculus \cite{bonyCalculSymboliquePropagation1981}.
    
    H\"ormander's condition, which is a wavefront set statement precluding singularities of the two factors in opposite directions, allows one to define a natural product on distributions using no information about global structure, at the cost of the product only existing as a distribution. For many applications, one would like more structure on the resulting product, including that regularity and decay at infinity are preserved under the product.
    
    Bony's paradifferential calculus extends this multiplication to the scale of Besov spaces $B^\alpha_{p,q}$, function spaces that overlap with H\"older continuous functions $C^{\alpha}$ if $\alpha \notin \Z$ and $p = q = \infty$. Multiplication of Besov distributions can be performed using Littlewood-Paley theory, decomposing the standard product into three components
    \begin{equation*}
        fg = \sum_{N_1 \ll N_2} P_{N_1}f P_{N_2}g + \sum_{N_1 \approx N_2} P_{N_1} f P_{N_2}g + \sum_{N_2 \ll N_1} P_{N_1} f P_{N_2}g. 
    \end{equation*}
    The outer sums are the \emph{paraproducts}, and the middle term is the \emph{resonant} product. While the two paraproducts are always defined as tempered distributions, this is only true for the resonant product if the H\"older-Besov regularities of $f, g$ add to a positive number; in particular, the resonant product is the primary obstruction to defining $fg$.
    
    Recent work by \cite{dappiaggi_besov_2023, dappiaggiMicrolocalVersionYoungs2024} has explored the connections between the microlocal perspective of H\"ormander and the classical Young multiplication theorem, which gives sharp conditions for continuously multiplying Besov distributions. There, one may use a direct analogy with H\"ormander's work on $C^\infty$ singularities, at the cost of only guaranteeing  membership of the product in a local Besov space.
    
    In this paper, we extend the Besov H\"ormander condition by adopting the global wavefront set perspective of Coriasco, Johansson and Toft \cite{coriasco_global_2013}, allowing us to handle local singularities as well as the generic growth at infinity permitted in global Besov classes. Our primary result is a set of pointwise singularity and directional growth conditions which allow us to multiply global Besov distributions. In particular, our result allows trading off regularity between the two factors at different points and directions in phase space, enabling control over each factor to descend to global control of the product. Our main theorem is the following.
    \begin{theorem}\label[theorem]{T:maintheorem}
        Let $f, g \in \Sw'$ and suppose that the following two conditions hold:
        \begin{enumerate}
            \item For every $(x, \hat{\xi}) \in \R^d \times S^{d-1}$, there exist $\alpha, \beta \in \R$ with $\alpha+\beta > 0$ and so that 
            \begin{equation}\label{E:H1} 
                (x, \hat{\xi}) \notin \cWF^\alpha(f) \qtq{and} (x, -\hat{\xi})\notin \cWF^\beta(g).\tag{H1}
            \end{equation}
            \item For every $(\hat{x}, \hat{\xi}) \in S^{d-1} \times S^{d-1}$, there exist $\alpha, \beta \in \R$ with $\alpha+\beta > 0$ and so that 
            \begin{equation}\label{E:H2}
                (\hat{x}, \hat{\xi}) \notin \oscWF^\alpha(f) \qtq{and} (\hat{x}, -\hat{\xi})\notin \oscWF^\beta(g). \tag{H2}
            \end{equation}
        \end{enumerate}
        Then the following hold: 
        \begin{enumerate}
            \item $\Pi(f,g)$ exists as a tempered distribution on $\R^d$. 
            \item The pointwise product $fg$ is globally well-defined as a tempered distribution, and is given by the formula 
            \begin{equation*}
                fg = \Delta^\ast(f\otimes g),
            \end{equation*}
            where $\Delta \: \R^d \to \R^d \times\R^d$ is the diagonal map.
            \item Independently of the hypotheses above, $\sP_f(g)$ exists as a tempered distribution on $\R^d$.
        \end{enumerate}
        In addition, if we take any $(x, x, \xi, -\xi)$ which is normal to the diagonal, with \[(x, \xi) \notin \scWF^\alpha(f), (x, -\xi) \notin \scWF^\beta(g),\] then $\alpha+\beta > 0$ is a sharp condition for the resonant product to be well-defined for $f,g$ as above.    
    \end{theorem}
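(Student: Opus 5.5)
The plan is to work in Littlewood--Paley form. Write $\Pi(f,g)=\sum_{|N_1-N_2|\lesssim 1}P_{N_1}fP_{N_2}g$ for the resonant term and $\sP_f(g)=\sum_{N_1\ll N_2}P_{N_1}fP_{N_2}g$ for the paraproduct. I would dispose of item (3) first, since it is independent of the hypotheses. Any tempered distribution lies in some weighted Besov space $\langle x\rangle^{m}B^{s}_{\infty,\infty}$, and likewise for $g$. Each summand $P_{N_1}fP_{N_2}g$ with $N_1\ll N_2$ has Fourier support in an annulus of size $\sim N_2$. The low-frequency sum $S_{N_2}f=\sum_{N_1\ll N_2}P_{N_1}f$ obeys $\|\langle x\rangle^{-m}S_{N_2}f\|_{L^\infty}\lesssim N_2^{\max(-s,0)+}$. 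Hence the block $S_{N_2}fP_{N_2}g$ has weighted $L^\infty$ norm polynomially bounded in $N_2$. Because the blocks are almost orthogonal in frequency, their sum converges in $\Sw'$ by the standard weighted Besov argument with a negative regularity index.

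For item (1) I would localize in phase space using a finite partition of unity adapted to the two compactifications. Consider a frequency $\hat\xi$ and a position $x$ that is either in a compact set or escaping along a direction $\hat x$. By compactness of $\R^d\times S^{d-1}$ restricted to a ball, together with compactness of $S^{d-1}\times S^{d-1}$, hypotheses \eqref{E:H1} and \eqref{E:H2} yield finitely many open cones $\Gamma_j$ (in position--frequency). On each cone $f$ has microlocal regularity $\alpha_j$ and $g$ has microlocal regularity $\beta_j$ in the opposite cone, with $\alpha_j+\beta_j>0$. Here $\oscWF$ is understood as measuring regularity and decay jointly. I would choose a cutoff $\chi_j(x)$, homogeneous of degree zero at infinity or compactly supported, and conic frequency multipliers $\psi_j(D)$, and decompose
\[
f=\sum_j \chi_j\psi_j(D)f + R_f,
\]
with $R_f$ smoothing and rapidly decaying in the relevant region, and similarly for $g$. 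In the resonant product only pairs in which the frequency of $f$ is near $\xi$ and that of $g$ near $-\xi$ produce low output frequencies. The pairing $P_{N}(\chi\psi f)P_{N}(\chi'\psi' g)$ is harmless when the frequency cones are not nearly opposite, since then the output frequency is $\sim N$ and the sum behaves like a paraproduct. For nearly opposite cones the estimate $\|P_N(\chi_j\psi_jf)\|\lesssim N^{-\alpha_j}$ against $N^{-\beta_j}$ gives a summable series $\sum_N N^{-(\alpha_j+\beta_j)}$ in weighted $L^\infty$. This yields $\Pi(f,g)\in\Sw'$.

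Item (2) then follows by defining $fg=\sP_f(g)+\Pi(f,g)+\sP_g(f)$. To identify this with $\Delta^*(f\otimes g)$, I would note that the hypotheses say exactly that the global wavefront set of $f\otimes g$ (of Besov type) avoids the conormal $\{(x,x,\xi,-\xi)\}$ with total regularity above zero. The pullback, defined as the limit of $(f_\epsilon\otimes g_\epsilon)\circ\Delta$ for mollified and cut-off approximants, therefore converges. The limit coincides with the Littlewood--Paley sum by uniqueness of the limit of the smooth approximations.

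Sharpness would come from a standard example. Take $f=\langle x\rangle^{-M}\sum_N N^{-\alpha}e^{iN x\cdot\xi_0}\phi(x)$ and $g$ the analogous sum with $-\xi_0$ and $N^{-\beta}$. When $\alpha+\beta=0$ the resonant diagonal sum is $\sum_N \phi^2$, which diverges. The main obstacle is the localization step in item (1): controlling commutators and the errors from microlocal cutoffs in the global (sc/osc) calculus uniformly in $N$, especially in transition regions where the finite cover mixes a compact piece with conic pieces at infinity.
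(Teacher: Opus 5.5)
Your route differs from the paper's. The paper never estimates Littlewood--Paley blocks on $\R^d$ by hand. It lifts to $\R^{2d}$ and notes that $\sP^{(2)}$ and $\Pi^{(2)}$ are quantizations of symbols in $\scS^{0,0}_{1,1}(\R^{4d})$. It then combines the refined tensor-product bound with (H1), (H2) to get $\cWF^r(f\otimes g)\cap\cNDelta=\emptyset$ and $\oscWF^r(f\otimes g)\cap\oscNDelta=\emptyset$ for $r<\alpha+\beta$, and finally applies the diagonal pullback corollary. Item (2) is then just $\Delta^\ast$ applied to $\sP^{(2)}[f\otimes g]+\Pi^{(2)}[f\otimes g]+\sP^{(2)}[g\otimes f]$. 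Your item (3) argument, via weighted $L^\infty$ bounds and the annular Fourier support of $S_{N_2}f\,P_{N_2}g$, is correct and more elementary than the paper's. Your sharpness example is essentially the paper's plane-wave example, and your choice of $-\xi_0$ in $g$ is the right one.

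Item (1), however, has a gap in the pairing step. A partition of unity $\{\chi_j\psi_j\}$ necessarily has overlapping supports. Write $f_j=\chi_j\psi_j(D)f$ and let $g_k$ be the antipodal piece of $g$. Among the terms $\Pi(f_j,g_k)$ with $j\neq k$ are pairs whose spatial supports overlap and whose cones are nearly opposite. These are genuinely resonant, and your labels only give the bound $N^{-(\alpha_j+\beta_k)}$. Nothing forces $\alpha_j+\beta_k>0$: with $(\alpha_1,\beta_1)=(1,-\tfrac12)$ and $(\alpha_2,\beta_2)=(-\tfrac12,1)$, one has $\alpha_2+\beta_1=-1$. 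You only treat matched pairs $j=k$, so your argument is silent exactly where regularity is traded between overlapping regions, which is the point of the theorem.

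The repair is to re-microlocalize. The resonant interaction of $f_j$ with $g_k$ lives on a fattening of $\operatorname{supp}(\chi_k\psi_k)$. That set sits compactly inside the region $U_k$ where $f$ is microlocally $B^{\alpha_k}$ and $g$ is antipodally $B^{\beta_k}$. So split $f_j=\Op(\theta_k)f_j+\Op(1-\theta_k)f_j$, with $\theta_k$ supported in $U_k$ and equal to $1$ near $\operatorname{supp}(\chi_k\psi_k)$. The first piece lies in $B^{\alpha_k}$ and pairs with $g_k\in B^{\beta_k}$; the second is non-resonant with $g_k$. Alternatively, refine the partition below a Lebesgue number of the cover. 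The paper avoids this bookkeeping because it localizes the single distribution $f\otimes g$ at points $(x,x,\xi,-\xi)$ of the conormal, where the cutoffs for $f$ and $g$ are automatically centered at the same base point. The commutator errors you flag as the main obstacle are real but routine: each commutator gains $\langle x\rangle^{-1}\langle\xi\rangle^{-1}$ in the $\scS_{1,1}$ calculus, and the paper packages this in the microlocality lemma.

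Separately, your identification in item (2) is asserted rather than proved. The claim that $(f_\epsilon\otimes g_\epsilon)\circ\Delta$ converges, independently of the regularization, is precisely the content of a pullback theorem under a Besov wavefront condition. What your framework actually yields is narrower. Set $S_K=\sum_{N\le K}P_N$; then $S_Kf\,S_Kg=\Delta^\ast(S_Kf\otimes S_Kg)$ converges to $\sP_f(g)+\Pi(f,g)+\sP_g(f)$ by items (1) and (3). Other mollifications would require running your item (1) estimates uniformly in $\epsilon$.
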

        \begin{remark}
            First, we notice that the theorem naturally permits us to have varying H\"older-Besov exponents throughout the wavefront sets of $f, g$, which significantly generalizes the classical paraproduct construction where this would be largely undetectable. 

            Second, this result generalizes the classical Young multiplication theorem, and in fact shows that existence of the product $fg$ as a distribution is equivalent to the existence of $\Pi(f,g)$ as a distribution, giving credence to the claim above that the obstruction to multiplying $f$ and $g$ is contained in the resonant product. 

            The condition $\alpha +\beta > 0$ is sharp in the sense that $\Pi \:\Sw(\R^d) \times \Sw(\R^d) \to \Sw'(\R^d)$ does not otherwise admit a continuous extension to the domain $B^\alpha(\R^d)\times B^{\beta}(\R^d)$.
        \end{remark}

    The remainder of the paper is organized as follows: in \Cref{S:Notation} we establish the notational conventions for harmonic and microlocal analysis for the rest of the paper. In \Cref{S:Properties}, we establish some fine properties relating to pullbacks and tensor products for the Besov wavefront sets. In \Cref{S:paraproducts} we prove \Cref{T:maintheorem}, and in \Cref{S:Appendix} we collect some short technical results relating to cutoffs.

\section*{Acknowledgements}
We thank Andy Badea for helpful discussions. J. Mott was supported by NSF RTG DMS-2135998. T. Van Hoose was supported by NSF Applied Math Grant DMS-2307384 and a Simons Dissertation Fellowship. 
\paragraph{\textbf{Statement on Generative AI Use}} Generative AI was not used at any stage of preparing this work. All errors, mathematical or otherwise, are the sole responsibility of the authors.
\section{Notation}\label{S:Notation}
     In this section, we introduce some notation that will be used throughout the remainder of the paper. We write $A \lesssim B$ or $B \gtrsim A$ to denote the inequality $A \leq CB$ for some constant $C > 0$, where $C$ may depend on parameters like the dimension or the indices of function spaces. If $A \lesssim B $ and $B \lesssim A$ both hold, then we write $A \sim B$. We will also denote generic dyadic integers by $N \in 2^{\N_0}$.
     We will also write $T^\ast\R^d \cong \R^{d} \times \R^d$ for the cotangent bundle of $\R^d$ and $T^\ast \R^d \setminus o$ for the cotangent bundle minus the zero section. We will refer to points generically in the cotangent bundle as $(x, \xi)$. We will often take the quotient of the bundle $T^\ast \R^d \setminus o$ by positive scaling in the fiber variable, yielding the cosphere bundle $S^\ast \R^d$. We perform this procedure frequently without comment, especially when dealing with the different wavefront sets below. Finally, we use $a \wedge b$ to denote the minimum of real numbers $a, b$ and $a \vee b$ for their maximum.
     
    \subsection{Harmonic Analysis, Littlewood-Paley theory and Besov spaces}
        We define the Fourier transform $\F$ by 
        \begin{equation*}
            (\F f)(\xi) = \hat{f}(\xi) = \int_{\R^d} e^{-ix\cdot \xi} f(x) \dx,
        \end{equation*}
        with inverse 
        \begin{equation*}
            (\F^{-1}g)(x) = \check{g}(x) =  (2\pi)^{-d} \int_{\R^d} e^{ix \cdot \xi} g(\xi) \d\xi.
        \end{equation*}
        We also fix a standard ball-annulus Littlewood-Paley partition of unity $(\vphi_N)_{N \in 2^{\N_0}}$ (cf \cite{bahouri_fourier_2011}). We define the Fourier multiplier operator $P_N$ by 
        $\widehat{P_N f} = \vphi_N f$ for every $N \in 2^{\N_0}$ and the convolution kernels $K_N$ by $K_N = \F^{-1}(\vphi_N)$.
        We define relations between frequency scales by 
        \begin{equation*}
            M \ll N \iff M \leq \tfrac{N}{8}, \quad M \approx N \iff \tfrac{N}{8} < M < 8N, \ \qtq{and}   M \gg N \iff M \geq 8N.
        \end{equation*} 
        \begin{definition}[Paraproduct Operators]\label[definition]{D:paraproducts}
            Let $f, g \in \Sw(\R^d)$. We define the low-high and resonant paraproducts by 
            \begin{equation*}
                \sP_f(g) = \sum_{N \ll M} P_N f P_M g \qtq{and} \Pi(f, g) = \sum_{M \approx N} P_M f P_N g.
            \end{equation*}
            This gives a decomposition (for $f, g \in \Sw(\R^d)$) of the product $fg$ into 
            \begin{equation*}
                fg = \sP_f(g) + \Pi(f,g) + \sP_g(f).
            \end{equation*}
            Next let $F(z_1, z_2) \: \R^d \times\R^d \to \Sw'(\R^d)$. We can define $2$-parameter analogues of the operators above by 
            \begin{equation*}
                \sP^{(2)}[F(z_1,z_2)](x,y) = \sum_{N \ll M} \int K_{N}(x-z_1) K_M(y-z_2) F(z_1, z_2) \dz_1 \dz_2
            \end{equation*}
            and 
            \begin{equation*}
                \Pi^{(2)}[F(z_1, z_2)](x,y) = \sum_{N  \approx M} \int K_N(x-z_1) K_M(y-z_2) F(z_1, z_2)\dz_1 \dz_2,
            \end{equation*}
        \end{definition}
        \begin{remark}
            We recover $\sP_f(g)$ and $\Pi(f,g)$ by applying $\sP^{(2)}$ and $\Pi^{(2)}$ to $f \otimes g$ and then pulling back by the diagonal map $\Delta(x) =(x,x)$.
        \end{remark}
        We also define the Besov spaces $B_{p,q}^s(\R^d)$ for $s \in \R, (p, q) \in [1, \infty]^2$ to be the space of Schwartz distributions with finite $B_{p,q}^s(\R^d)$-norm, defined by 
        \begin{equation*}
            \|u\|_{B_{p, q}^s} = \left\| N^s \|P_N u\|_{L^p} \right\|_{\ell_N^q},
        \end{equation*}
        with the usual modification to a supremum in the event that $p, q= \infty$.  For convenience we will write $B^s = B_{\infty, \infty}^s$.

        We state the usual paraproduct estimates as a lemma (cf. \cite{bahouri_fourier_2011}):
        \begin{lemma}\label[lemma]{L:Paraproduct estimates}
            Let $s_1, s_2 \in \R$. Then we have the following estimates for the paraproduct operators defined above:
            \begin{align*}
                \| \sP_{f}(g)\|_{B^{s_1 \wedge 0 + s_2}} &\lesssim \|f\|_{B^{s_1 \wedge 0}} \|g\|_{B^{s_2}},\\
                \|\Pi(f, g)\|_{B^{s_1 + s_2}} &\lesssim \|f\|_{B^{s_1}}\|g\|_{B^{s_2}} \qtq{as long as $s_1 + s_2 > 0$.}
            \end{align*}
        \end{lemma}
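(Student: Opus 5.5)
The final statement is \Cref{L:Paraproduct estimates}, the standard paraproduct bounds in $B^s = B^s_{\infty,\infty}$. I would prove both estimates by summing Littlewood--Paley blocks and using Fourier support localization.

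\textbf{Low--high paraproduct.} Write $\sP_f(g) = \sum_M S_M f \, P_M g$, where $S_M f = \sum_{N \ll M} P_N f$. Since $N \le M/8$, the Fourier support of $S_M f$ lies in a ball of radius $\lesssim M/8$. The Fourier support of $P_M g$ lies in an annulus of size $\sim M$. Hence each term $S_M f\, P_M g$ has Fourier support in an annulus comparable to $M$, which is the point of the $\ll$ convention. Consequently $P_K(S_M f P_M g) = 0$ unless $K \approx M$, and only boundedly many $M$ contribute to a given $K$.

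For each such term, $\|P_K(S_Mf P_Mg)\|_{L^\infty} \lesssim \|S_M f\|_{L^\infty}\|P_M g\|_{L^\infty}$, since the kernels $K_K$ are uniformly bounded in $L^1$. Next, $\|P_M g\|_{L^\infty} \le M^{-s_2}\|g\|_{B^{s_2}}$. Finally, with $\sigma = s_1\wedge 0 \le 0$,
\[
\|S_Mf\|_{L^\infty} \le \sum_{N\ll M} N^{-\sigma}\|f\|_{B^{\sigma}} \lesssim M^{-\sigma}\|f\|_{B^\sigma}
\]
when $\sigma<0$, since this is a geometric sum dominated by its largest term. When $\sigma = 0$ the sum produces a $\log M$ loss. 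I would handle that case in the usual way: in Bahouri--Chemin--Danchin the $s_1 \wedge 0 = 0$ case is stated with $L^\infty$ in place of $B^0$, and I would either adopt that reading or note the harmless endpoint loss. Multiplying the bounds gives $K^{\sigma+s_2}\|P_K \sP_f(g)\|_{L^\infty} \lesssim \|f\|_{B^\sigma}\|g\|_{B^{s_2}}$. The sum defining $\sP_f(g)$ converges in $\Sw'$ because its blocks are annularly localized and polynomially bounded.

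\textbf{Resonant product.} This is the main obstacle, since here the Fourier supports are not annular. For $M\approx N$, the product $P_Mf\,P_Ng$ has Fourier support in a ball of radius $\lesssim M$. Hence $P_K(P_Mf P_Ng)$ vanishes unless $M \gtrsim K$. This gives
\[
\|P_K\Pi(f,g)\|_{L^\infty} \lesssim \sum_{M\gtrsim K}\sum_{N\approx M} M^{-s_1}N^{-s_2}\|f\|_{B^{s_1}}\|g\|_{B^{s_2}} \lesssim \sum_{M \gtrsim K} M^{-(s_1+s_2)}\|f\|_{B^{s_1}}\|g\|_{B^{s_2}}.
\]
The last sum is geometric and converges precisely because $s_1+s_2>0$, yielding the bound $K^{-(s_1+s_2)}$. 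The same summability shows that the series $\sum_{M\approx N}P_MfP_Ng$ converges in $B^{s_1+s_2-\epsilon}$, and hence in $\Sw'$, so $\Pi(f,g)$ is well defined. This establishes the second estimate.
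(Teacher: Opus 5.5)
Your argument is correct: it is the standard Littlewood--Paley block proof from Bahouri--Chemin--Danchin, which the paper simply cites without giving a proof of its own. Your caveat at $s_1\wedge 0=0$ is more than a technicality, because with $B^0=B^0_{\infty,\infty}$ the stated bound is actually false (take $g$ a single plane wave at frequency $\sim M$ and $f$ a lacunary sum of cosines with $\|P_Nf\|_{L^\infty}\sim 1$, so that $\|\sP_f(g)\|_{L^\infty}=\|S_Mf\|_{L^\infty}\sim\log M$); you should therefore commit to the $L^\infty$ reading (equivalently, $\|f\|_{B^{s_1}}$ with $s_1>0$ on the right) rather than the ``endpoint loss'' option, which only yields a bound in $B^{s_2-\epsilon}$.
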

        
      \subsection{Microlocal Analysis} 
        We next establish our conventions for the necessary tools from microlocal analysis. First, we define the symbol class $\scS_{r_1, r_2}^{m_1, m_2}$. This is a slight generalization of the classical scattering symbol class, allowing for more flexibility in the symbolic conversion between differentiation and decay in the variable/covariable.
         \begin{definition}
            Fix $m_1, m_2, r, \rho \in \R$ and let $a(x, \xi)$ be a smooth function on $\R^{2d}$. We say that $a \in \scS_{r, \rho}^{m_1, m_2}(\R^{2d})$ if for every pair of multiindices $\alpha, \beta$ we have the bounds 
            \begin{equation*}
                |D_x^\alpha D_\xi^\beta a(x, \xi)| \lesssim \jbrak{x}^{m_1 -r|\alpha|} \jbrak{\xi}^{m_2 - \rho|\beta|}.
            \end{equation*}
            We write for convenience $\scS^{m_1, m_2}_{1,1} = \scS^{m_1, m_2}$.
        \end{definition}
        Next for any symbol $a(x, \xi) \in \scS_{r_1, r_2}^{m_1, m_2}$, we quantize to get an operator $\Op_t(a) : \Sw(\R^d) \to \Sw(\R^d)$ by
        \begin{equation*}
            \Op_t(a) = \int_{\R^d} a((1-t)x + ty, \xi)e^{i(x-y)\cdot\xi}\dy\d\xi,
        \end{equation*}
        for any $t \in [0,1]$. In the case $t =0$, we see that this is the usual Kohn-Nirenberg quantization, and we write 
        \begin{equation}
            \Op(a)u = a(x, D)u(x).
        \end{equation}
        Following \cite{coriasco_global_2013}, we restrict our attention to spaces of distributions for which quantization is well behaved.
        \begin{definition}
            Let $B$ be a Fr\'echet space so that for some $d \geq 1$,
            \begin{equation*}
                \Sw(\R^d) \subseteq B \subseteq \Sw'(\R^d).
            \end{equation*}
            Fix $r, \rho \in [0, 1]$. Then we say that $B$ is \emph{$\scS$-admissible} with parameters $r, \rho, d$ if 
                $\Op_t(a) \: B \to B$ continuously
            for any $a \in \scS_{r, \rho}^{0, 0}(\R^{2d})$ and $t \in \R$.
        \end{definition}
      \begin{proposition}\label[proposition]{P:scadmissible}
            Let $s \in \R$ and $d\geq 1$. Then $B^s(\R^d)$ is $\scS$-admissible with parameters $(1, 1, d)$. 
        \end{proposition}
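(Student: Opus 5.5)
We need to show that for every $a \in \scS^{0,0}_{1,1}(\R^{2d})$ and every $t$, the operator $\Op_t(a)$ is bounded on $B^s = B^s_{\infty,\infty}$. The plan is to reduce to the Kohn--Nirenberg case $t=0$, and then to a Littlewood--Paley estimate for the exotic class $S^0_{1,1}$ in $\xi$, using the extra $x$-decay of $x$-derivatives. The order below deals with the main $t=0$ estimate first and the change of quantization afterwards.

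\textbf{Step 1: reduction to $t=0$.} For $t\neq 0$, write $\Op_t(a)=\Op_0(a_t)$ via the standard change-of-quantization formula
\begin{equation*}
a_t(x,\xi)=\iint e^{-iy\cdot\eta}\,a(x+ty,\xi+\eta)\,dy\,d\eta
\end{equation*}
(an oscillatory integral). One checks that $a_t \in \scS^{0,0}_{1,1}$. Formally this holds because $\partial_x^\alpha a$ gains $\jbrak{x}^{-|\alpha|}$ and $\partial_\xi^\beta a$ gains $\jbrak{\xi}^{-|\beta|}$, so $\partial_x\partial_\xi a \in \scS^{-1,-1}$. After integrating by parts in $y$ and $\eta$, the remainder terms should therefore be symbols of the same class.

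This is precisely where the scattering gain in $x$ matters. For pure $S^0_{1,1}$ the change of quantization fails, but the decay in $\jbrak{x}$ restores it. I would handle it with the usual Peetre-type bound $\jbrak{x+ty}^{m}\lesssim\jbrak{x}^m\jbrak{y}^{|m|}$, together with dyadic decomposition in $\eta$, to justify the oscillatory integral. Any failure of exact asymptotics is avoided because we only need symbol bounds, not an expansion.

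\textbf{Step 2: the case $t=0$, via the Coifman--Meyer symbol decomposition.} Decompose $a(x,\xi)=\sum_M a(x,\xi)\vphi_M(\xi)$, and expand each piece in a Fourier series in $\xi$ on the annulus of size $M$. This writes $a(x,D)$ as a rapidly convergent sum of elementary operators
\begin{equation*}
\sum_M b_{k,M}(x)\,P'_M u ,
\end{equation*}
where $P'_M$ are modified Littlewood--Paley projections (modulated by $e^{ik\cdot\xi/M}$), and the coefficients $b_{k,M}$ are bounded in $L^\infty$ with $\jbrak{k}^{-\infty}$ decay. Since $|b_{k,M}|\lesssim \jbrak{k}^{-K}$ and $\|P'_M u\|_{L^\infty}\lesssim \jbrak{k}^{c}M^{-s}\|u\|_{B^s}$, each elementary operator becomes a paraproduct $\sum_M b_{k,M}P'_Mu$.

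Next, split $b_{k,M} = S_{M/8} b_{k,M} + (1-S_{M/8})b_{k,M}$. The low-frequency part gives a spectrally localized sum at scale $M$, hence is bounded in $B^s$ for all $s$.

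\textbf{Step 3: the high-frequency part of the coefficients (the main obstacle).} The high-frequency part $(1-S_{M/8})b_{k,M}\,P'_Mu$ is the term that fails for general $S^0_{1,1}$ when $s\le 0$. Here I use the $x$-structure of the coefficients: $\partial_x^\alpha b_{k,M}=O(\jbrak{x}^{-|\alpha|})$ uniformly in $M$. The hope is that the dangerous output frequencies, which come from frequencies of $b$ of size $\gtrsim M$, can be controlled by Bernstein's inequality, $\|P_L b\|_{L^\infty}\lesssim L^{-n}\|\nabla^n b\|_{L^\infty}$. This gives a factor $M^{-n}$, which absorbs any loss $M^{|s|}$.

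The delicate point is summing these pieces in the $B^s$ norm at output frequency $N\lesssim L$. I would estimate
\begin{equation*}
\|P_N(P_L b\, P'_M u)\|_{L^\infty}\lesssim L^{-n}M^{-s}\|u\|_{B^s},
\end{equation*}
sum over $L\gtrsim M\gtrsim N/8$, and pick $n>|s|$. Finally, sum over $k$ using $\jbrak{k}^{-\infty}$ to conclude $\|a(x,D)u\|_{B^s}\lesssim\|u\|_{B^s}$.

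Continuity of $\Op_t(a)$ on $B^s$ as a Fréchet (here Banach) space then follows, and $B^s$ contains $\Sw$ and sits in $\Sw'$, as required by the definition of $\scS$-admissibility.
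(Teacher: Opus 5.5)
Your argument works, but it takes a different route from the paper, and it rests on a misreading of the symbol class.

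In the paper's notation, the subscripts of $\scS^{0,0}_{1,1}$ are the gains $\jbrak{x}^{-1}$ and $\jbrak{\xi}^{-1}$ per derivative. They are not H\"ormander's $(\rho,\delta)=(1,1)$. Since $\jbrak{x}^{-|\alpha|}\le 1$, one has $\scS^{0,0}_{1,1}\subset\scS^{0,0}_{0,1}=S^0_{1,0}$, so there is no exotic behaviour to overcome. The paper uses exactly this. It rewrites $\Op_t(a)$ as a Kohn--Nirenberg operator with an $S^0_{1,0}$ symbol, then cites Bourdaud's theorem that such operators are bounded on $B^s_{\infty,\infty}$ for all $s$.

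Your Steps 2--3 amount to a self-contained Coifman--Meyer proof of that theorem. The only input to your Bernstein step is $\|\nabla^n b_{k,M}\|_{L^\infty}\lesssim_{n,K}\jbrak{k}^{-K}$ uniformly in $M$, i.e.\ bounded $x$-derivatives; the decay $\jbrak{x}^{-|\alpha|}$ is never used. Similarly, in Step 1 the change of quantization works because $x$-derivatives cost no powers of $\jbrak{\xi}$, not because of decay in $x$. Also, $a_t\in S^0_{1,0}$ would suffice there. So your remarks that the scattering gain ``restores'' what fails for $S^0_{1,1}$ misidentify the mechanism, though they do not harm the estimates.

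One small fix is needed in Step 3. The product $P_Lb_{k,M}\,P'_Mu$ feeds every output frequency $N\lesssim L$. The summation region is therefore $L\gtrsim\max(M,N)$, with $M\ll N$ allowed, not $L\gtrsim M\gtrsim N/8$. The resulting quantity $N^s\sum_M\max(M,N)^{-n}M^{-s}$ is still bounded uniformly in $N$ once $n>|s|$, so your conclusion stands.

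As for what each route buys: the paper's proof is two lines but depends on an external theorem. Yours is self-contained and shows explicitly which finitely many seminorms of $a$ control the $B^s$ operator norm.
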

        \begin{proof}
            The first thing to note is $\scS_{1,1}^{0,0} \subseteq \scS_{0, 1}^{0, 0}$ where the latter space is the classical H\"ormander symbol class. By using Fourier inversion we can rewrite 
            \begin{equation*}
                \Op_t(a) = \Op(b(t, x, \xi))
            \end{equation*}
            for some explicit symbol $b$  which is a symbol in $\scS_{0, 1}^{0,0}$ (uniformly in $t$). We then appeal to \cite[Theorem 1]{bourdaudLpEstimatesCertainNonregular1982}, which immediately yields the boundedness on $B_{\infty, \infty}^s$ for any $s \in \R$.
        \end{proof}
        In this framework, the paraproducts $\sP^{(2)}$ and $\Pi^{(2)}$ are zeroth-order $\Psi$DOs.
        \begin{proposition}\label[proposition]{P:micro_paraproducts}
            The symbols of $\sP^{(2)}$ and $\Pi^{(2)}$ viewed as operators on $2$-variable distributions, are in the symbol class 
            \begin{equation*}
                \scS^{0,0}_{1, 1}(\R^{4d}).
            \end{equation*}
        \end{proposition}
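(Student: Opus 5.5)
Since $\sP^{(2)}$ and $\Pi^{(2)}$ are Fourier multipliers on $\R^{2d}$, their symbols are independent of the spatial variables $(x,y)$. The $x$-conditions in the definition of $\scS^{0,0}_{1,1}(\R^{4d})$ (boundedness with $\jbrak{(x,y)}^{-|\alpha|}$ decay under $D_{(x,y)}^\alpha$) therefore hold trivially: every $x$-derivative of order $\geq 1$ vanishes. What remains is to show that the symbols
\begin{equation*}
    p(\xi,\eta) = \sum_{N \ll M} \vphi_N(\xi)\vphi_M(\eta), \qquad \pi(\xi,\eta) = \sum_{N\approx M} \vphi_N(\xi)\vphi_M(\eta)
\end{equation*}
are smooth on $\R^{2d}$ and satisfy $|D_{(\xi,\eta)}^\beta a(\xi,\eta)| \lesssim \jbrak{(\xi,\eta)}^{-|\beta|}$ for all multiindices $\beta$. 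Indeed, $\sP^{(2)}F = \F^{-1}[p\,\hat F]$ by the definition, since the kernels $K_N(x-z_1)K_M(y-z_2)$ are convolution kernels with Fourier transform $\vphi_N(\xi)\vphi_M(\eta)$.

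First I check local finiteness and smoothness. Each $\vphi_N$ is supported in $\{|\xi|\sim N\}$ for $N\geq 2$, or in a ball for $N=1$. So for fixed $(\xi,\eta)$ only $O(1)$ indices $N$ and $O(1)$ indices $M$ contribute, uniformly in $(\xi,\eta)$. Each sum is therefore locally a finite sum of smooth functions, hence smooth, and bounded by a constant since $0\le\vphi_N\le 1$.

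Next comes the derivative bound. Write $D^\beta = D_\xi^{\beta_1}D_\eta^{\beta_2}$. A single term satisfies
\begin{equation*}
    |D^{\beta_1}\vphi_N(\xi)\, D^{\beta_2}\vphi_M(\eta)| \lesssim N^{-|\beta_1|}M^{-|\beta_2|},
\end{equation*}
using the scaling $\vphi_N(\xi)=\vphi(\xi/N)$ for $N\ge2$. The key point is to convert this into $\jbrak{(\xi,\eta)}^{-|\beta|}$, which needs $N \gtrsim \jbrak{(\xi,\eta)}$ and $M\gtrsim\jbrak{(\xi,\eta)}$ on the support of the term.

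For $\pi$ this holds, since $N\approx M$ and $|\xi|\lesssim N$, $|\eta|\lesssim M$ give $N\sim M\gtrsim \jbrak{(\xi,\eta)}$. For $p$ we have $N\ll M$, so $|\xi|\lesssim N\ll M\sim\jbrak{\eta}\sim\jbrak{(\xi,\eta)}$, which handles the $\eta$-derivatives. However, $\xi$-derivatives only give $N^{-|\beta_1|}$ with $N$ possibly much smaller than $M$.

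This $\xi$-derivative issue for the low--high paraproduct is the main obstacle. I would resolve it by resumming in $N$ first:
\begin{equation*}
    p(\xi,\eta)=\sum_M \chi_{M}(\xi)\vphi_M(\eta), \qquad \chi_M := \sum_{N\le M/8}\vphi_N = \chi(\cdot/M)
\end{equation*}
for a fixed smooth bump $\chi$, by telescoping of the partition of unity. Then $|D^{\beta_1}\chi_M|\lesssim M^{-|\beta_1|}$, and the support of each term is $|\eta|\sim M$, $|\xi|\lesssim M$, so $M\sim\jbrak{(\xi,\eta)}$. Summing the $O(1)$ terms that are nonzero at a given point yields $|D^\beta p|\lesssim \jbrak{(\xi,\eta)}^{-|\beta|}$, and the same resummation written with $\vphi_M\vphi_{M'}$ over $M'\approx M$ gives the bound for $\pi$.

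Finally, since $\jbrak{(\xi,\eta)}^{-|\beta|}\le\jbrak{(\xi,\eta)}^{-\rho|\beta|}$ with $\rho=1$, both symbols lie in $\scS^{0,0}_{1,1}(\R^{4d})$. The low-frequency pieces ($N=1$ or $M=1$) are compactly supported smooth functions, so they are harmless.
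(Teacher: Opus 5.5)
Your proof is correct and takes the same route as the paper. Both arguments compute the $(x,y)$-independent multiplier symbols $\sum_{N\ll M}\varphi_N(\xi)\varphi_M(\eta)$ and $\sum_{N\approx M}\varphi_N(\xi)\varphi_M(\eta)$ and then verify the $\scS^{0,0}_{1,1}$ estimates. The paper only asserts these estimates are ``easy to see'', and writes them with a weight $\jbrak{x+y+\xi+\eta}$ that conflates the spatial and frequency variables. You state the correct form, $|D^\beta_{(\xi,\eta)}a|\lesssim\jbrak{(\xi,\eta)}^{-|\beta|}$, and you supply the one step that is not automatic. That step is the resummation $\sum_{N\ll M}\varphi_N=\chi(\cdot/M)$, which gives $\xi$-derivative decay at the high-frequency scale $M$ for the low--high symbol.
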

        \begin{proof}
            This is straightforward; we may explicitly calculate the symbols, which yields
            \begin{equation*}
                \sigma(\sP^{(2)}) (x, y, \xi, \eta) = \sum_{M \ll N} \varphi_M(\xi) \varphi_N(\eta)
            \end{equation*}
            and 
            \begin{equation*}
                \sigma(\Pi^{(2)})(x,y, \xi, \eta) = \sum_{M \approx N} \varphi_M(\xi) \varphi_N(\eta).
            \end{equation*}
            From here, it is easy to see the symbolic bounds 
            \begin{equation*}
                |\partial_x^{\alpha_1} \partial_y^{\alpha_2} \partial_\xi^{\beta_1} \partial_\eta^{\beta_2} \sigma(T)(x, y, \xi, \eta)| \lesssim \jbrak{x+y+ \xi+ \eta}^{-(|\alpha_1| + |\alpha_2| + |\beta_1|+|\beta_2|)},
            \end{equation*}
            where $T \in \{\sP^{(2)}, \Pi^{(2)}\}$. This implies the desired result.
        \end{proof}
        We also define two classes of cutoffs that will be used to localize distributions to different portions of the cotangent bundle.
        \begin{definition}\label[definition]{D:localization}
            Let $U \subset \R^d$ be open and $\Gamma \subset \R^d \setminus \{0\}$ be an open cone. Fix $(x_0,\xi_0) \in U \times \Gamma$. 
            \begin{enumerate}
                    \item A function $\phi \: \R^d \to \R$ is called a \emph{cutoff} to $x_0$ if $\phi \in C_c^\infty(U)$, $0 \leq \phi(\cdot) \leq 1$ and if there is an open neighborhood of $x_0$ where $\phi \equiv 1$. 
                \item A function $\psi \: \R^d \to \R$ is a \emph{directional cutoff to $\xi_0$ (on $\Gamma$)} if there exists $R >0 $ and an open conical neighborhood $\xi_0 \in \Gamma_1 \subset \Gamma$ so that the following hold:
                \begin{itemize}
                    \item $0 \leq \psi(\cdot) \leq 1$ and $\rm{supp}(\psi) \subset \Gamma$;
                    \item For $\xi \geq R$ and $t \geq 1$, $\psi(t\xi) = \psi(\xi)$, and
                    \item $\psi(\xi) = 1$ on $\Gamma_1$ and for $|\xi| \geq R$. 
                \end{itemize}
            \end{enumerate}
            For brevity, we will typically omit the point and open set when clear from context. 
            We denote the set of cutoffs to $x_0 \in U$ by $\D_{x_0}(U)$ and the set of directional cutoffs to $(\xi_0, \Gamma)$ by $\D_{\xi_0}^{\rm{dir}}(\Gamma)$.
            We also define for $x_0 \in \R^d$ a \emph{fattened cutoff} to $x_0$ to be a pair $(\vphi, \bbar{\vphi})$ so that $\vphi$ is a cutoff to $x_0$, and $\bbar{\vphi}$ is a cutoff to a neigborhood of $\supp{\vphi}$. 
        \end{definition}

        We next define our scattering wavefront sets. Recall that for functions $f, g \: \R^d \to \R$, we form their tensor product $f \otimes g \: \R^{2d} \to \R$ by $(f \otimes g)(x,\xi) = f(x)g(\xi)$. Additionally, if $A, A' \subset T^\ast \R^d \setminus o$ we define
        \begin{equation*}
            A \boxtimes A' = \{(x,y, \xi, \eta)\mid (x, \xi) \in A, (y, \eta) \in A'\}.
        \end{equation*}
        We now define 3 different wavefront sets components to capture membership in $B$:
        \begin{definition}\label{D:wfsets}
            Let $B$ be a Fr\'echet space so that $\Sw(\R^d) \subseteq B \subseteq \Sw'(\R^d)$  for some $d$.
            Let $U \subset \R^d$ open and $\Gamma, \Gamma_1, \Gamma_2$ be open cones. Let $u \: \R^d \to \R$. Then 
            \begin{enumerate}
                \item $(x_0, \xi_0) \notin \cWF(u; B)$ if there exists $\phi \in \D_{x_0}(U)$ and $\psi \in \Ddir_{\xi_0}(\Gamma)$ so that $\Op(\phi \otimes \psi)u \in B$. 
                \item $(x_0, \xi_0) \notin \FWF(u; B)$ if there exists $\phi \in \D_{\xi_0}(U)$ and $\psi \in \Ddir_{x_0}(\Gamma)$ so that $\Op(\psi \otimes \phi)u \in B$. 
                \item $(x_0, \xi_0) \notin \oscWF(u; B)$ if there exist $\psi_1, \psi_2 \in \Ddir_{x_0}(\Gamma_1) \times \Ddir_{\xi_0}(\Gamma_2)$ so that $\Op(\psi_1 \otimes \psi_2)u \in B$.
            \end{enumerate}
            
            The \emph{scattering $B$-wavefront set of $u$}, denoted $\scWF(u; B)$ is the union 
            \begin{equation*}
                \scWF(u; B) = \cWF(u; B) \cup \FWF(u; B) \cup \oscWF(u; B).
            \end{equation*}
            In the case $B = \Sw(\R^d)$, this is the usual scattering wavefront set. Our primary concern will be the case $B = B^\alpha(\R^d)$, in which case we will write $\scWF(u; B^\alpha) := \scWF^\alpha(u)$, with the obvious notational modifications to the other wavefront sets.
        \end{definition} 
        \begin{remark}
            We call the set $\cWF$ the \emph{classical} wavefront set, and note that in the Schwartz case having empty $\cWF$ is equivalent to being smooth. The sets $\FWF$ and $\oscWF$ are referred to as the \emph{Fourier} and \emph{oscillating} wavefront sets.
        \end{remark}
        \begin{remark}\label[remark]{R:settheory}
            We have the containment 
            \begin{equation*}
               \scWF(u; \Sw') \subseteq \scWF(u; B) \subseteq \scWF(u; \Sw),
            \end{equation*}
            which follows from the containment $\Sw \subseteq B \subseteq \Sw'$.
        \end{remark}
        \begin{remark}
            We note that $\FWF^s(f) = \FWF^t(f)$ for all $s, t \in \R$. To see this, note that if $(x_0, \xi_0) \notin \FWF^s(f)$, then there exist $\psi_{x_0} \in \Ddir_{x_0}$ and $\vphi_{\xi_0} \in \D_{\xi_0}$ so that $\psi_{x_0}\vphi_{\xi_0}(D_x) f \in B^s$. Now choose slightly smaller cutoffs $\bbar{\psi}_{x_0}, \bbar{\vphi}_{\xi_0}$ according to \Cref{L:sepsupps} so that $\bbar{\vphi}_{\xi_0}\vphi_{\xi_0} = \bbar{\vphi}_{\xi_0}$ and $\bbar{\psi}_{x_0} \psi_{x_0} = \bbar{\psi}_{x_0}$. We now observe the identity 
            \begin{equation*}
                \bbar{\psi}_{x_0} \bbar{\vphi}_{\xi_0}(D)f = \bbar{\psi}_{x_0} \bbar{\vphi}_{\xi_0}(D) \psi_{x_0} \vphi_{\xi_0}(D)f + \bbar{\psi}_{x_0} \bbar{\vphi}_{\xi_0}(D)(1-\psi_{x_0})\vphi_{\xi_0}(D) f.
            \end{equation*}
            The second term is Schwartz by \Cref{L:sepsupps}, and the first is Schwartz because $\bbar{\psi}$ is a zeroth order multiplier and $\bbar{\vphi}_{\xi_0}$ has compact Fourier support.
        \end{remark}
        We will also need the following enlargements of the wavefront sets:
        \begin{definition}
           First, for the classical wavefront set, we define 
            \begin{equation}
                \cWF_0^s(f) = \cWF^s(f) \cup (\supp{f} \times \{0\}).
            \end{equation}
            For the Fourier and oscillating wavefront sets,  define $\pi_1, \pi_2 \: \R^{2d} \to \R^d$ to be the canonical projections in the physical and fiber variables. Then we set 
            \begin{equation}
                \FWFt^s(f) = \FWF^s(f) \cup (\{0\} \times \pi_2 \FWF^s(f)).
            \end{equation}
            and
            \begin{equation}
                \begin{split}  
                \oscWFt^s(f) = &\oscWF^s (f) \cup \left[\{0\} \times \pi_2\left(\cWF^s(f) \cup \oscWF^s(f)\right) \right] \cup \\ 
                &\left[\pi_1 (\FWF^s(f) \cup \oscWF^s(f)) \times \{0\} \right] \cup \{(0,0)\}
                \end{split}
            \end{equation}
        \end{definition}
        \begin{remark}
            These definitions will be used in stating the natural version of the tensor product theorem for wavefront sets in the Besov regime; we will give a brief indication of why the enlargements are necessary here. 

            The essential reason for the enlargement is because the natural space in which to embed the Fourier and oscillating wavefront sets of $f \otimes g$ are of the form 
            \begin{equation*}
                \left\{(a \hat{x}, b \hat{y}, \xi, \eta)) \resizemid \begin{aligned}
                    &(\hat{x}, \xi) \in S^{n-1} \times \R^n \\
                    &(\hat{y}, \eta) \in S^{m-1} \times \R^m \\
                    &a^2+b^2=1
                \end{aligned} \right\} \qtq{for} \FWF(f \otimes g)
            \end{equation*}
            and similarly
            \begin{equation*}
                \left\{(a \hat{x}, b \hat{y}, \alpha \hat{\xi}, d \hat{\eta})) \resizemid \begin{aligned}
                    &(\hat{x}, \hat{\xi}) \in S^{n-1} \times S^{n-1} \\
                    &(\hat{y}, \hat{\eta}) \in S^{m-1} \times S^{m-1} \\
                    &a^2+b^2= \alpha^2+\beta^2=1
                \end{aligned} \right\} \qtq{for} \oscWF(f \otimes g)
            \end{equation*} 
            in particular, it is possible to take $a = 0, b=1$ (or various combinations thereof, in the case of the oscillating wavefront set). In the proofs, the enlargements defined above allow us to compactly compensate for the loss of information coming from attempting to take cutoffs in the `zero direction'. More precisely, we note that
            \begin{equation*}
                (0, \xi) \notin \FWFt(f) \iff \exists \vphi \in \mathscr{D}_\xi(\R^d) \text{ so that } \vphi(D_x) f \in B^s,
            \end{equation*}
            with similar versions for the oscillating wavefront set. 
        \end{remark}
        The consequence of \Cref{P:scadmissible} and the definition above is the applicability of the following theorem in \cite{coriasco_global_2013}:
        \begin{theorem}[Theorem 2.6, \cite{coriasco_global_2013}]
            Let $B$ be an $\scS$-admissible space. Then we have the following characterization of `$B$-regularity': for any $f \in\Sw'(\R^d)$, 
            \begin{equation*}
                f \in B \iff \scWF(f;B) = \emptyset.
            \end{equation*}
        \end{theorem}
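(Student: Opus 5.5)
The statement to prove is Theorem 2.6 of \cite{coriasco_global_2013}: for an $\scS$-admissible $B$ and $f\in\Sw'(\R^d)$, $f\in B$ if and only if $\scWF(f;B)=\emptyset$.

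The forward direction is immediate. If $f\in B$, then for any point of $T^\ast\R^d$ we may choose the cutoffs in \Cref{D:wfsets} arbitrarily. The corresponding symbols $\phi\otimes\psi$, $\psi\otimes\phi$ or $\psi_1\otimes\psi_2$ all lie in $\scS^{0,0}_{1,1}$. By admissibility each $\Op(\cdot)f$ then lies in $B$, so none of the three component wavefront sets contains any point.

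For the converse I would run a compactness and partition-of-unity argument on the radial compactification $\overline{\R^d}\times\overline{\R^d}$ of phase space. Assume $\scWF(f;B)=\emptyset$. Each boundary face receives an open cover:
\begin{itemize}
\item the interior region $\R^d\times S^{d-1}$ (finite $x$, infinite $\xi$) is covered by sets $U\times\Gamma$ coming from points outside $\cWF$;
\item the face $S^{d-1}\times\R^d$ is covered by sets $\Gamma\times U$ coming from points outside $\FWF$;
\item the corner $S^{d-1}\times S^{d-1}$ is covered by products of cones coming from points outside $\oscWF$.
\end{itemize}
Each point has an associated cutoff symbol $a_j\in\scS^{0,0}_{1,1}$, equal to $1$ near the point at infinity in the compactified sense, with $\Op(a_j)f\in B$.

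By compactness of $\overline{\R^{2d}}$ I extract finitely many $a_1,\dots,a_n$ whose ``elliptic sets at infinity'' cover the whole boundary. Then $a=\sum_j a_j$ satisfies $|a|\gtrsim 1$ outside a compact subset of $\R^{2d}$; the nonnegativity of the cutoffs prevents cancellation. The remainder $1-\chi$, where $\chi$ is a cutoff to that compact set, makes $a+\chi$ elliptic in $\scS^{0,0}_{1,1}$. I would then build a parametrix $b\in\scS^{0,0}_{1,1}$ with
\[
\Op(b)\,\Op(a+\chi)=I+R,\qquad R\ \text{with symbol in } \scS^{-\infty,-\infty}.
\]
Hence $R$ maps $\Sw'\to\Sw$, and $\Op(\chi)f\in\Sw$ because $\chi$ is compactly supported in phase space.

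Writing $f=\Op(b)\bigl(\sum_j\Op(a_j)f+\Op(\chi)f\bigr)-Rf$, each term lies in $B$. This uses admissibility for $\Op(b)$, the inclusions $\Sw\subseteq B$, and the assumption $\Op(a_j)f\in B$. So $f\in B$.

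The main obstacle is the symbolic calculus in $\scS^{m_1,m_2}_{1,1}$: composition, ellipticity and parametrix construction, with remainders gaining decay in both $x$ and $\xi$ simultaneously. This needs the bi-graded (SG) asymptotic expansion, including care at the corner where both gains are needed at once. I would follow the standard SG calculus of Cordes/Parenti, adapted to the $r=\rho=1$ case, and appeal to the $t$-quantization flexibility in the admissibility definition to move between left and right quantizations when composing.
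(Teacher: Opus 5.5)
Your proof is correct, and it is the standard compactness-plus-parametrix argument for this characterization; the paper gives no proof of its own here, only the citation to \cite{coriasco_global_2013}. Admissibility together with $\scS^{0,0}_{1,1}\subseteq\scS^{0,0}_{r,\rho}$ gives the forward implication, and the converse follows from four ingredients: the finite subcover of the compact boundary, the nonnegativity of the cutoffs (which prevents cancellation), the compactly supported correction $\chi$ making the symbol bounded below by $1$, and an SG parametrix with Schwartz remainder.

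The one thing to tidy is where compactness is applied. It must be the product compactification $\overline{\R^d}\times\overline{\R^d}$, whose three boundary faces you list, and not the radial compactification $\overline{\R^{2d}}$ you name later. In the latter, sets $U\times\Gamma$ with $U$ bounded are not neighbourhoods of any boundary point.
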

        We next state some facts about elliptic regularity and microlocalization in this scale of spaces. The main result we will use is \cite[Theorem 3.1]{coriasco_global_2013}, whose result we restate here, albeit in slightly different notation and language. We begin by defining the relevant notion of characteristic sets.
        \begin{definition}
            Let $a = a(x,\xi) \in \scS_{r, \rho}^{0,0}(\R^{2d})$, $r, \rho \geq 0$. We say that 
            \begin{enumerate}
                \item $a$ is \emph{$\mathrm{c}$-invertible} at $(x_0, \xi_0) \in \R^d \times (\R^d \setminus \{0\})$ if there exists an open neighborhood $U$ of $x_0$, an open conical neighborhood $\Gamma$ of $\xi_0$, and $R, c > 0$ so that $|a(x, \xi)| > c$ for $x \in U, \xi \in \Gamma \cap \{|\xi| \geq R\}$.
                \item $a$ is \emph{$\F$-invertible} at $(x_0, \xi_0) \in (\R^d \setminus \{0\}) \times \R^d$ if there exists an open neighborhood $U$ of $\xi_0$, an open conical neighborhood $\Gamma$ of $x_0$ and $R, c > 0$ so that $|a(x, \xi)| > c$ for $x \in \Gamma \cap \{ |x| \geq R\}$ and $\xi \in U$.
                \item Finally, $a$ is \emph{$\mathrm{osc}$-invertible} at $(x_0, \xi_0) \in (\R^d \setminus \{0\})^2$ if there exist open cones $\Gamma_1 \ni x_0$, $\Gamma_2 \ni \xi_0$ and constants $R, c > 0$ so that $|a(x, \xi)| > c$ on $(\Gamma_1 \cap \{|x| \geq R\}) \times (\Gamma_2 \cap \{|\xi| \geq R\})$. 
            \end{enumerate}
            For each $\tau \in \{ \mathrm{c}, \F, \mathrm{osc}\}$ the complement of the points where $a$ is $\tau$-invertible is the \emph{$\tau$-characteristic set}, written $\mathrm{Char}^\tau(a)$.
        \end{definition}

        \begin{remark}
            In the $\rm{c}$-invertible case, the fact that we demand invertibility of the total symbol rather than the principal symbol does not change the resulting wavefront set, though the characteristic set is smaller than the classical one, cf. \cite[Remark 1.22]{coriasco_global_2013}.
        \end{remark}
        
        We conclude this section by recalling the Besov wavefront set equivalent of microlocality and microellipticity.
        \begin{lemma}[Theorem 3.1, \cite{coriasco_global_2013}] \label[lemma]{L:microlocality_and_microellipticity}
            Let $s \in \R$, $f \in \Sw'$, and take any symbol $a \in \scS_{1, 1}^{0,0}(\R^{2d})$. Then for $\tau \in \{c, \F, \mathrm{osc}\}$,
            \begin{equation*}
                \prescript{\tau}{}\WF^s(\Op(a)f) \subset \prescript{\tau}{}\WF^s(f) \subset \prescript{\tau}{}\WF^s(\Op(a)f) \cup \Char^\tau(a).
            \end{equation*}
        \end{lemma}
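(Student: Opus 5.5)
Since this lemma is \cite[Theorem 3.1]{coriasco_global_2013} specialized to $B = B^s$, the plan is to check that their argument only uses $\scS$-admissibility of $B^s$ with parameters $(1,1,d)$, which is \Cref{P:scadmissible}, together with the scattering symbol calculus. I would treat the three choices $\tau \in \{\mathrm{c}, \F, \mathrm{osc}\}$ simultaneously. For each $\tau$, a ``$\tau$-cutoff'' to $(x_0,\xi_0)$ means a product $\chi = \phi\otimes\psi$ of the appropriate type from \Cref{D:wfsets}: compact or directional in $x$, and compact or directional in $\xi$. The calculus facts I would invoke are these:
\begin{itemize}
\item[(i)] $\scS^{m_1,m_2}_{1,1}$ is closed under composition, with an asymptotic expansion whose $j$-th term lies in $\scS^{m_1-j,m_2-j}_{1,1}$.
\item[(ii)] Operators with symbols in $\bigcap_N \scS^{-N,-N}$ map $\Sw'$ into $\Sw \subset B^s$.
\end{itemize}

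\textbf{Microlocality.} Suppose $(x_0,\xi_0)\notin \prescript{\tau}{}\WF^s(f)$, with $\Op(\chi) f\in B^s$. Choose a smaller $\tau$-cutoff $\chi'$ to the same point, with $\chi = 1$ on a (conic, large-$|\cdot|$) neighbourhood of $\supp \chi'$; \Cref{L:sepsupps} is the tool for producing such nested cutoffs. Then write
\begin{equation*}
\Op(\chi')\Op(a) f = \Op(\chi')\Op(a)\Op(\chi) f + \Op(\chi')\Op(a)\Op(1-\chi) f .
\end{equation*}
The first term is in $B^s$: the composition $\Op(\chi')\Op(a)$ is $\Op_0$ of a symbol in $\scS^{0,0}_{1,1}$, and admissibility (\Cref{P:scadmissible}) gives boundedness on $B^s$.

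For the second term, $\chi'$ and $1-\chi$ have separated supports in the relevant region. Every term of the composition expansion of $\chi' \# a \# (1-\chi)$ therefore vanishes there, and the remainder after $N$ terms is in $\scS^{-N,-N}_{1,1}$ on that region. Where the cutoff is compact in $x$ or in $\xi$, decay in that variable is automatic. Thus the operator is smoothing-and-decaying to all orders and maps $\Sw'$ into $\Sw\subset B^s$. This shows $(x_0,\xi_0)\notin \prescript{\tau}{}\WF^s(\Op(a)f)$.

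\textbf{Microellipticity.} Suppose $(x_0,\xi_0)\notin \Char^\tau(a)\cup \prescript{\tau}{}\WF^s(\Op(a)f)$. On the region $U\times\Gamma$ (or its analogue for $\F$, $\mathrm{osc}$), $|a|>c$ there. I would take a $\tau$-cutoff $\tilde\chi$ supported in that region and set $b_0 = \tilde\chi/a$. This is a genuine symbol in $\scS^{0,0}_{1,1}$: derivatives of $1/a$ are controlled by the bounds on $a$ and the lower bound $|a|>c$. Composition then gives $\Op(b_0)\Op(a) = \Op(\tilde\chi) + \Op(r_1)$ with $r_1\in \scS^{-1,-1}_{1,1}$ microsupported in $\supp\tilde\chi$.

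A Neumann-type iteration produces $b\sim b_0+b_1+\cdots$, with $b_j\in \scS^{-j,-j}_{1,1}$ supported where $|a|>c$, such that for a smaller cutoff $\chi$
\begin{equation*}
\Op(\chi) = \Op(\chi)\Op(b)\Op(a) + R, \qquad R\in \Op\Bigl(\textstyle\bigcap_N \scS^{-N,-N}\Bigr).
\end{equation*}
Applying this identity to $f$, the term $Rf$ is Schwartz by (ii). The term $\Op(\chi)\Op(b)\,\Op(a) f$ is handled by the microlocality part: $\Op(a)f$ is $B^s$-regular near the point, and $\chi b$ is microsupported there after shrinking the cutoffs. Hence $\Op(\chi) f\in B^s$.

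\textbf{Main obstacle.} I expect the hardest step to be the composition and parametrix calculus in $\scS^{0,0}_{1,1}$ uniformly across the three regimes, in particular the $\mathrm{osc}$ case. There the cutoffs are directional in both $x$ and $\xi$, so the disjoint-support argument must produce simultaneous decay $\jbrak{x}^{-N}\jbrak{\xi}^{-N}$ from the symbolic gains alone. I would first try to verify this directly from the oscillatory-integral remainder formula, exploiting the gain of one power of both $\jbrak{x}$ and $\jbrak{\xi}$ per derivative. Failing that, I would simply cite the calculus in \cite{coriasco_global_2013}, whose only $B$-dependent input is admissibility, already supplied by \Cref{P:scadmissible}.
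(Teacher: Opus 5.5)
Your proposal is correct and follows the paper's route. The paper gives no independent argument: it invokes \cite[Theorem 3.1]{coriasco_global_2013}, with the $\scS$-admissibility of $B^s$ supplied by \Cref{P:scadmissible}, and your nested-cutoff and parametrix sketch is the standard argument behind that theorem. The $\mathrm{osc}$ case you single out is no harder than the others once you take $\chi'$ supported in $\{|x|\geq R'\}\times\{|\xi|\geq R'\}$ inside the cones where $\chi\equiv 1$. Then $\supp\chi'\cap\supp(1-\chi)=\emptyset$ outright, and the SG expansion puts $\chi'\#a\#(1-\chi)$ in $\bigcap_N\scS^{-N,-N}_{1,1}$ directly. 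By contrast, a nesting valid only for large $|\cdot|$ would leave an overlap such as $\{|x|\leq R\}\times\Gamma_2'$, which is unbounded in $\xi$, and there your remark that decay is automatic does not apply.
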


        \section{Fine Properties of the Besov Scattering Calculus}\label{S:Properties} 

            In this section, we examine the behavior of Besov wavefront sets under pullback and tensor products, which we then use in the sequel to prove the main result.

        \subsection{Wavefront Sets Under Pullback}
        We prove several facts about the transformation laws of general scattering wavefront sets as an analogy to classical results about smooth wavefront sets, before restricting our attention back to Besov spaces. Unlike in the smooth setting, we require control over our diffeomorphisms at infinity to handle pullbacks. We begin by defining a modification of \cite[Definition 16]{coriascoFIO1999}, which allows us to pull back points at infinity in our compactified physical or fiber variables.
        \begin{definition}
            We say that $f : \R^d \rightarrow \R^d$ is a classical SG-compatible diffeomorphism if the following hold:
            \begin{enumerate}
                \item $f$ is a diffeomorphism between open subsets $U$ and $V$ in $\R^d$ such that
            \begin{align}
                \forall x & \in U, \quad |\partial^\alpha_x f(x)| \lesssim \jbrak{x}^{1-|\alpha|} \\
                \forall y & \in V, \quad |\partial^\alpha_y (f^{-1})(y)| \lesssim \jbrak{y}^{1-|\alpha|},
            \end{align}
            \item There exist a $U' \subset U$, $V' \subset V$, and a $\delta > 0$ such that $f\big|_{U}$ is a diffeomorphism from $U' \rightarrow V'$ and
            \begin{align}
                \forall x \in U, &\quad B(x, \delta\jbrak{x}) \subset U',\\
                \forall y \in V, & \quad B(y, \delta \jbrak{y}) \subset V'.
            \end{align}
            \item $f$ has an asymptotic expansion into homogeneous terms in $x$, i.e. for $x$ outside of some compact set
            \begin{align}
                f(x) \sim \sum_j f_{1-j}(x)
            \end{align}
            for $f_{1-j}$ homogeneous in $x$ of degree $1-j$, modulo an error term $r(x)$ satisfying $|\partial_x^\alpha r| \lesssim \jbrak{x}^{-N}$ for any $N$.
            \end{enumerate}
        \end{definition}
        We next prove that scattering wavefront sets are invariant under pullback by a classical SG-compatible diffeomorphism.
        \begin{proposition}
            \label[proposition]{P:diffeo-invariance}
            Let $f$ be a classical $SG$ diffeomorphism and take $u \in \D'$. For any SG-compatible space $B$ such that $f^*(B) = B$, we have that
            \begin{align}
                \scWF(f^*u) = f^* \left(\scWF(u)\right).
            \end{align}
        \end{proposition}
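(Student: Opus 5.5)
We aim to prove $\scWF(f^*u) \subseteq f^*(\scWF(u))$; the reverse inclusion then follows by applying the same argument to $f^{-1}$, which is also a classical SG-compatible diffeomorphism, together with $u = (f^{-1})^* f^* u$. Here $f^*$ acting on a point of phase space means the usual cotangent lift $(x,\xi) \mapsto (f^{-1}(x), (Df)^{T}\xi)$ at finite points. At the compactified boundary it means the induced map on directions. For $\FWF$ and $\oscWF$, direction $\hat x$ is sent to the direction of the principal homogeneous part $f_1$ of $f$, and the fiber is transformed by $Df_1$, which is homogeneous of degree $0$. Condition (3) in the definition guarantees that this boundary map is well defined and smooth on $S^{d-1}$.

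The core of the argument is an Egorov-type conjugation in the $\scS^{0,0}_{1,1}$ calculus. Fix $(x_0,\xi_0) \notin \prescript{\tau}{}\WF(u)$ for some $\tau \in \{\mathrm{c},\F,\mathrm{osc}\}$, and choose a cutoff symbol $a = \phi\otimes\psi$ with $\Op(a)u \in B$. We then construct a symbol $b$ with $\Op(b) f^* = f^*\Op(a) + R$, where two conditions hold:
\begin{enumerate}
\item $b$ is $\tau$-invertible at the pulled-back point;
\item $R$ is smoothing and decaying modulo terms microsupported away from that point.
\end{enumerate}

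The computation is the standard one. We write
\[
f^*\Op(a)(f^{-1})^* v(x) = \int e^{i(f(x)-f(y))\cdot \xi} a(f(x),\xi) v(y)\,|\det Df^{-1}|\,\dy\,\d\xi .
\]
We apply Kuranishi's trick, $f(x)-f(y) = M(x,y)(x-y)$, and change variables $\eta = M^T\xi$. The ball condition (2), $B(x,\delta\jbrak{x})\subset U'$, lets us insert a cutoff to $|x-y| \le \delta\jbrak{x}$. On this region $M$ is uniformly invertible and satisfies SG bounds, by condition (1). Off this region, repeated integration by parts in $\xi$ gains powers of $\jbrak{x-y}^{-1} \lesssim \jbrak{x}^{-1}$ as well as of $\jbrak{\xi}^{-1}$, so that part has a Schwartz kernel. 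The remaining amplitude lies in $\scS^{0,0}_{1,1}$, and reducing it to a left symbol gives $b(x,\eta) = a(f(x), (Df(x))^{-T}\eta)$ plus lower-order terms in both $\jbrak{x}$ and $\jbrak{\eta}$.

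To conclude, note that $f^*\Op(a)u \in B$ because $f^*B = B$, and the remainder terms lie in $\Sw \subseteq B$. Hence $\Op(b)f^*u \in B$. Since $b$ is $\tau$-invertible at $f^*(x_0,\xi_0)$, the microellipticity half of \Cref{L:microlocality_and_microellipticity} gives $f^*(x_0,\xi_0)\notin \prescript{\tau}{}\WF(f^*u)$. For $\tau = \mathrm{c}$ invertibility is immediate, because $b$ is close to $1$ near $f^{-1}(x_0)$ in a cone around the transformed covector. For $\tau = \F$ and $\mathrm{osc}$ we use condition (3): for large $|x|$, $f(x)$ lies in a cone around $f_1(\hat x)$ and $Df(x) = Df_1(\hat x) + O(\jbrak{x}^{-1})$. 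The conic supports therefore transform correctly, and $b$ is bounded below on a sufficiently thin cone.

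We expect the main obstacle to be this last point together with the remainder bookkeeping at infinity. One needs that lower-order terms in $x$ really do decay, which is what the exponent $r=1$ in $\scS_{1,1}$ and the SG bounds are designed to provide. One also needs the transformed cutoff to contain a genuine product of directional cutoffs, so that the elliptic lower bound holds uniformly on $\{|x|\ge R\}\times\{|\xi|\ge R\}$. If the full Egorov expansion is cumbersome, a fallback is to verify only ellipticity of the principal part and absorb the rest using the microlocality half of \Cref{L:microlocality_and_microellipticity} applied to $f^*u$.
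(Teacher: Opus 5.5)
Your proposal is correct and follows essentially the same route as the paper: conjugate $\Op(a)$ by the pullback to obtain a pseudodifferential operator whose symbol is the transported symbol $a(f(x),Df(x)^{-T}\eta)$ plus terms decaying in $\langle x\rangle$ and $\langle \eta\rangle$, check $\tau$-invertibility at the transported point, invoke microellipticity, and obtain the reverse inclusion from $f^{-1}$. The differences are only in mechanism and direction. You derive the conjugation formula by hand via Kuranishi's trick and the ball condition, whereas the paper imports it from Coriasco's SG Fourier integral operator calculus with phase $f(x)\cdot\xi$ (after reducing to a polyhomogeneous $a$). The paper also starts from a point outside $\mathrm{WF}(f^*u)$ and computes $(f^{-1})^*\Op(a)f^*$, while you start from a point outside $\mathrm{WF}(u)$ and compute $f^*\Op(a)(f^{-1})^*$.
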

        \begin{proof}
            Suppose first that $(x_0, \xi_0) \notin \FWF^s(f^*u)$. Thus for some symbol $a \in \scS^{0,0}(\R^{2d})$ that is invertible on some conical exterior $\Gamma \cap B_R^c \ni x_0$ and some open set $U \ni \xi_0$ such that $\Op(a) (f^*u) \in B$. By \Cref{L:further-localization}, we may without loss of generality take $a$ to be an $x$-polyhomogeneous classical SG symbol (c.f. \cite{egorov_pseudo-differential_1997} or \cite{coriasco_fourier_2002}), meaning it possesses an asymptotic expansion in homogeneous $x$ terms $a(x, \xi) \sim \sum_j a_{-j}(x, \xi)$. Note that $|a(x, \xi)| > c > 0$ on $\Gamma \cap B_R^c \times U$ implies that on some smaller sets $\Gamma, U$ with larger $R$, $|a_0(x, \xi)| > c' > 0$.
            
            We define $\varphi(x, \xi) = f(x) \cdot \xi$, which satisfies the requirements to be a SG phase function. Then \cite[Theorem 7]{coriascoCalculusFourierIntegral2016} tells us that the composition $\Op(a) (f^*u)$ can be written as an SG Fourier Integral Operator $\Op_\varphi(h)$ with phase function $\varphi$ and amplitude
            \begin{align}
                \label{E:FIO-amplitude-expansion}
                h(x, \eta) \sim \sum_{\alpha \in \N^d} \frac{1}{\alpha!} (\partial^\alpha_\xi a)(x, f^*(\eta)) D^\alpha_{x'} \left[ e^{i\psi(x, x', \eta)} \right]\bigg|_{x'=x},
            \end{align}
            where $f^*(\eta) = d_x \varphi(x, \eta)$ and $\psi$ is the first order Taylor remainder
            \begin{align}
                \psi(x, x', \eta) = (f(x') - f(x)) \cdot \eta - (x'-x) \cdot f^*(\eta).
            \end{align}
            We next see that $\Op_\varphi(h)$ is given by the pullback of a pseudodifferential operator, since
            \begin{align}
                \Op_\varphi(h(x, \eta)) u & = \frac{1}{(2\pi)^d} \iint e^{i(f(x) - x') \cdot \eta} h(x, \eta) u(x') \, dx' \\
                & = f^* \left( \Op(h(f^{-1}(y), \eta) u(y) \right).
            \end{align}
            Define $b(y, \eta) = h(f^{-1}(y), \eta)$. From \eqref{E:FIO-amplitude-expansion}, we know $b$ is in $\scS^{0,0}$. Since $f^*(B) = B$, up to a Schwartz error,
            \begin{align}
                \Op(b(y, \eta)) u = (f^{-1})^* \Op_\varphi(h(x, \xi)) u = (f^{-1})^* \Op(a(x, \xi)) (f^*u) \in B.
            \end{align}
            To conclude, we show that $b(y, \eta)$ is $\mathcal{F}$-microelliptic near $(f^{-1})^*(x_0, \xi_0)$. We use the symbol expansion \cref{E:FIO-amplitude-expansion} to derive this from the properties of $a$.

            Examining the $x'$-derivative term, we see that the Taylor remainder $\psi$ has vanishing first derivative. Thus, summands of \eqref{E:FIO-amplitude-expansion} for $\alpha \neq 0$ are given by terms of the form
            \begin{align}
                \frac{1}{\alpha!} (\partial_\xi^\alpha a)(x, f^*(\eta)) \prod_{j}(\partial^{\alpha_j}_x (f(x) \cdot \eta)),
            \end{align}
            for some multiindices $\alpha_j$ such that $\alpha = \sum_j \alpha_j$ and $|\alpha_j| \geq 2$ for each $j$. By our hypotheses on $a$ and $f$, this tells us that \eqref{E:FIO-amplitude-expansion} can be written as an asymptotic expansion of homogeneous symbols in $x$. Therefore, we may apply the SG Borel-type lemma from \cite{coriasco_asymptotic_2014} to conclude that
                $h(x, \eta) - a(x, f^*(\eta)) \in SG^{-1, 0}_{1,1}$
            and therefore that $|h(x, \eta) - a(x, f^*(\eta))| \lesssim \jbrak{x}^{-1}$. Then $|a(x, \xi)| > c'$ on $\Gamma \cap B^c_R \times U$ implies that $h(x, \xi) > c'/2$ on $\Gamma \cap B^c_{R'} \times V$  for some smaller $V \subset U$ and $R'$ sufficiently large.\footnote{The pullback $f^*(U)$ depends on the spatial variable $x$. However, the asymptotic linearity of $f$ ensures that there always exists an open $V \subset \R^d$ such that $V \subset f^*_x(U)$ for all $x \in \Gamma' \cap B^c_{R'}$}. Thus $b(y, \eta)$ is $\mathcal{F}$-microelliptic near $(f^{-1})^*(x_0, \xi_0)$ and $(x_0, \xi_0) \notin f^*\WF(u)$. The opposite inclusion follows from considering $f^{-1}$ and performing the same analysis.

            For the oscillating wavefront set, one picks $a$ instead to be simultaneously homogeneous in $x$ and $\xi$ before applying the same techniques. The statement is also known to already hold in more generality for the classical wavefront set \cite{hormanderAnalysisLinearPartial2007}.
        \end{proof}

        To prove general transformation laws under pullback by a smooth embedding, we begin with the simpler case of a smooth inclusion $\iota : \R^d \rightarrow \R^{2d}$ with $\iota(x) = (x, 0)$ and then use diffeomorphism invariance.
        \begin{proposition}
            \label[proposition]{P:WF-trace-inclusion}
            Take $\iota : \R^d \rightarrow \R^{2d}$ with $\iota(x) = (x, 0)$ and any SG-compatible space $B(\R^{2d})$ such that $g^* B(\R^{2d}) \subset B(\R^d)$. Then for any $u \in \Sw'$ with both
            \begin{align}
                \cWF(u) \cap \{(x, 0, 0, \eta) : x \in \R^d, \eta \in \R^d \setminus 0 \} = \varnothing, \\
                \oscWF(u) \cap \{(x, 0, 0, \eta) : x \in \R^d \setminus 0, \eta \in \R^d \setminus 0 \} = \varnothing.
            \end{align}
            Then $\iota^*(u)$ is defined as a distribution and $\tauWF(\iota^* u) \subset \iota^* \left( \tauWF(u) \right)$ for $\tau \in \{ \mathrm{c}, \F, \mathrm{osc}\}$.
        \end{proposition}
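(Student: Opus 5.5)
The plan is to define $\iota^*u$ by a Fourier-side restriction formula, in the spirit of Hörmander's restriction theorem, and to control it microlocally by splitting $u$ with a partition of unity adapted to the three components of the scattering wavefront set. Formally, with coordinates $(x,y)\in\R^d\times\R^d$,
\begin{equation*}
\widehat{\iota^* u}(\xi) = (2\pi)^{-d}\int_{\R^d} \hat u(\xi,\eta)\,\mathrm{d}\eta ,
\end{equation*}
and this is the formula we aim to make sense of. The first step is to cover a neighborhood of $\{y=0\}$ in phase space by finitely many (conically) localized pieces. This needs some care, since $x$ ranges over all of $\R^d$. For bounded $x$ we use cutoffs $\phi\in\D_{x_0}$ in $(x,y)$. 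For large $x$ we use directional cutoffs $\psi\in\Ddir_{\hat x_0}$ in the physical variable. Compactness of $S^{d-1}\times S^{d-1}$ in the directions, together with compactness of balls, will allow a finite cover once we radially compactify.

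On each piece, the hypotheses on $\cWF(u)$ and $\oscWF(u)$ give a symbol $a\in\scS^{0,0}$ which is elliptic on the conormal set $\{(x,0,0,\eta)\}$. Lemma~\ref{L:microlocality_and_microellipticity} then lets us write $u=\Op(a)u+\Op(1-a)u$, with $\Op(a)u\in B(\R^{2d})$. The complementary symbol $1-a$ is supported away from the conormal directions, that is, in a region where $|\eta|\lesssim |\xi|$ at large frequency.

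The second step is to show that the restriction formula converges for the piece microsupported where $|\eta|\lesssim|\xi|$. Here $\eta$ ranges over a set of size $\lesssim\jbrak{\xi}^d$. Since $\hat u$ has polynomial growth (being tempered after localization), the $\eta$-integral produces a tempered distribution in $\xi$, and it depends continuously on $u$. This defines $\iota^*u$. It also agrees with $u(x,0)$ on Schwartz functions, which gives uniqueness by density in the appropriate topology (Hörmander's sequential continuity on $\{u:\WF\subset\Gamma\}$). For the piece in $B(\R^{2d})$, we use the hypothesis $\iota^* B(\R^{2d})\subset B(\R^d)$ (interpreting the stated $g^*$ as $\iota^*$).

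The third step is the wavefront inclusion. Take $(x_0,\xi_0)\notin\iota^*\tauWF(u)$, meaning that every $(x_0,0,\xi_0,\eta)\notin\tauWF(u)$. By compactness in $\eta$, after compactifying so that $\eta\to\infty$ is controlled by the hypothesis on the conormal directions, we can find a single product cutoff $\chi(x,y,\xi,\eta)=\phi(x)\phi(y)\psi(\xi)$ which is elliptic at all these points. For this cutoff, $\Op(\chi)u\in B$. We then use the commutation identity
\begin{equation*}
\Op(\phi\otimes\psi)\,\iota^*u=\iota^*\Op(\tilde\chi)u + \text{Schwartz error},
\end{equation*}
where $\tilde\chi$ is $\chi$ with no cutoff in $\eta$. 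This holds because a symbol that is independent of $\eta$ and $y$ commutes with restriction to $y=0$ up to the choice of quantization. Applying $\iota^*B\subset B$ then gives $(x_0,\xi_0)\notin\tauWF(\iota^*u)$. The $\F$ and $\mathrm{osc}$ cases go the same way, with directional cutoffs in $x$, and using the enlarged sets (such as $\oscWFt$) to handle the degenerate directions $(\hat x,0)$.

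The main obstacle is uniformity at spatial infinity in the splitting step. We need the $\eta$-compactness argument to produce a single symbol in $\scS^{0,0}_{1,1}$ whose bounds hold uniformly as $|x|\to\infty$ in a cone. We also need the region where $|\eta|\lesssim|\xi|$ to be controlled jointly with the oscillating hypothesis. I would first try to build $a$ as a sum of homogeneous cutoffs in $(\hat x,\hat\eta)$ and in $\hat\eta$ alone, using Lemma~\ref{L:further-localization}, and then check the symbol estimates directly.
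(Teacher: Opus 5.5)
Your Step 3 is the paper's argument. You use compactness in $\eta$, with the conormal hypotheses controlling $\eta\to\infty$, to get a single cutoff. You then use that a symbol independent of $(y,\eta)$ satisfies $\Op(a)\iota^*u=\iota^*\Op(a^*)u$ with $a^*(x,y,\xi,\eta)=a(x,\xi)$, and finish with $\iota^*B(\R^{2d})\subset B(\R^d)$.

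The paper only asserts that $\iota^*u$ exists, so your Steps 1--2 go beyond it. However, Step 2 does not work as written:
\begin{itemize}
\item Polynomial growth of a Fourier transform comes from Paley--Wiener and requires compactly supported localization. Your pieces at large $|x|$ are only directionally localized, so their Fourier transforms are merely tempered distributions. For instance, $u(x,y)=v(x)$ with $v=\sum_{k\ge1}\delta(x-ke_1)$ satisfies both hypotheses, because every cutoff to a direction $(0,\hat\eta)$ annihilates $\hat u=(2\pi)^d\,\hat v\otimes\delta$. Yet its localizations near the direction $(e_1,0)$ at infinity do not have function-valued Fourier transforms.
\item $\Op(1-a)u$ is not Fourier supported in $\{|\eta|\lesssim|\xi|\}$, since $1-a$ depends on $(x,y)$; only its wavefront set lies there. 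So you cannot say that $\eta$ ranges over a set of size $\lesssim\jbrak{\xi}^d$.
\end{itemize}
The fix is a duality argument. Let $b\in\scS^{0,0}$ be supported in $\{|\eta|\le C\jbrak{\xi}\}$. Then for $u\in\Sw(\R^{2d})$ and $\chi\in\Sw(\R^d)$,
\[
\int_{\R^d}\chi\,\iota^*\Op(b)u\,\mathrm{d}x=\int_{\R^{2d}}\hat u(\xi,\eta)\,F_\chi(\xi,\eta)\,\mathrm{d}\xi\,\mathrm{d}\eta,\qquad F_\chi(\xi,\eta)=(2\pi)^{-2d}\int_{\R^d}e^{ix\cdot\xi}\,b(x,0,\xi,\eta)\,\chi(x)\,\mathrm{d}x.
\]
Integrating by parts in $x$ makes $F_\chi$ rapidly decreasing in $\xi$, and the support condition upgrades this to rapid decrease in $(\xi,\eta)$. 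Hence $F_\chi\in\Sw(\R^{2d})$ depends continuously on $\chi$, and $\iota^*\Op(b)$ extends continuously to $\Sw'(\R^{2d})$. With this replacement, your splitting does define $\iota^*u$.

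You leave the uniformity at spatial infinity unresolved and call it the main obstacle. The paper handles it by taking the physical cutoff to be a joint directional cutoff $\psi(x,y)\in\Ddir_{(x_0,0)}$ on $\R^{2d}$. It does not use a directional cutoff in $x$ times a compactly supported cutoff in $y$. Finite products of joint cutoffs stay in $\scS^{0,0}$, which gives the uniformity. By contrast, $\psi(x)\phi(y)$ with $\phi\in C_c^\infty$ is not an $\scS^{0,0}$ symbol on $\R^{2d}$, because its $y$-derivatives gain no factor $\jbrak{(x,y)}^{-1}$.

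With the joint cutoff, the tangential symbol is $\psi(x,0)\chi(\xi)$, and $\Op(\psi(x,0)\chi(\xi))u$ splits into two terms:
\begin{itemize}
\item $\Op(\psi(x,y)\psi(x,0)\chi(\xi))u$, which lies in $B$ by Lemma~\ref{L:further-localization};
\item a remainder whose symbol vanishes near $\{y=0\}$, once the tangential cutoff is taken inside $\{\psi=1\}$, and which therefore restricts to zero.
\end{itemize}

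Finally, the enlarged sets such as $\oscWFt$ play no role here. The direction $(\hat x_0,0)$ is a genuine point of $S^{2d-1}$. The only degeneration is $\eta\to\infty$, i.e.\ the frequency directions $(0,\hat\eta)$, and that is exactly what the conormal hypothesis on $\oscWF(u)$ controls.
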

        \begin{proof}
            Again, we prove this result only for the Fourier wavefront set component; the classical component again follows from standard results, while the oscillating requires only requires a minor modification of the compactness argument.
            
            Suppose that $(x_0, \xi_0) \notin \iota^* \FWF(u)$, which is equivalent to $(x_0, 0, \xi_0, \eta) \notin \FWF(u)$ for all $\eta \in \R^d$. Thus, for each $\eta \in \R^d$, we have $\psi_\eta \in \Ddir_{(x_0, 0)}$ and a $\varphi_\eta \in \D_{(\xi_0, \eta)}$ such that $\Op(\psi_\eta \otimes \varphi_\eta)u \in B(\R^{2d})$. Likewise, by the hypothesis on $\oscWF(u)$, we know that there is a $\Tilde{\psi} \in \Ddir_{(x_0, 0)}$ and a $\Tilde{\varphi} \in \Ddir_{(\xi_0, 0)}$ such that $\Op(\Tilde{\psi} \otimes \Tilde{\varphi}) u \in B(\R^{2d})$.
    
            We note that the $\eta$ such that $(\xi_0, \eta)$ is outside the set where $\Tilde{\psi} = 1$ is compact. Thus, we may pick finitely many $\eta_j$ in this range such that $\supp \varphi_{\eta_j}$ covers $\{(\xi, \eta) : \Tilde{\psi}(\xi, \eta) = 1\}$. Then we define $\psi = \Tilde{\psi} \cdot \prod_j \psi_{\eta_j} \in \Ddir_{(x_0, 0)}$ and $\varphi = \Tilde{\varphi} + \sum_j \varphi_\eta$, noting that by construction $\Op(\psi \otimes \varphi) u \in B(\R^{2d})$.
    
            For our particular choice of diffeomorphism, an explicit evaluation of $\Op(a(x, \xi)) \iota^* u$ for any symbol $a$ gives us that
            \begin{align}
                \Op(a(x, \xi)) \iota^* u = \iota^* \left( \Op(a^*(x, y, \xi, \eta) u \right),
            \end{align}
            for $a^*(x, y, \xi, \eta) = a(x, \xi)$. Then, picking $\chi(\xi) \in \D_{\xi_0}$ to be such that $\supp \chi(\xi)$ is properly contained in $\supp \varphi$, we define $a(x, \xi) = \psi(x, 0) \chi(\xi)$. Clearly $a$ is $\mathcal{F}$-microelliptic near $(x_0, \xi_0)$. Additionally, we note that
            \begin{align}
                \Op(\psi(x, 0) \chi(\xi)) u = \Op(\psi(x, y) \psi(x, 0) \chi(\xi)) u + \Op((1-\psi(x, y)) \psi(x, 0) \chi(\xi)) u.
            \end{align}
            The first of these terms is in $B(\R^{2d})$ by the fact that $\supp \psi(x,y)\psi(x, 0) \chi(\xi)$ is properly contained in $\supp \psi(x, y) \varphi(\xi, \eta)$ and \Cref{L:further-localization}. The second of these terms has support away from the range of $f$, so it pulls back to $0$. Thus,
            \begin{align}
                \Op(\psi(x, 0) \chi(\xi) (\iota^* u) & = \iota^*\left(\Op(\psi(x, 0) \chi(\xi)) u  \right) \\ & = \iota^*\left(\Op(\psi(x, y) \psi(x, 0) \chi(\xi)) u  \right) \in \iota^*(B(\R^{2d})) \subset B(\R^d).
            \end{align}
            Hence $(x_0, \xi_0) \notin \FWF(\iota^*u)$.
        \end{proof}
        We now prove our general result about pullbacks of Besov wavefront sets under compatible diffeomorphisms.
        \begin{theorem}\label[theorem]{T:pullback}
            Let $s > 0$ and let $g: \R^{2d} \rightarrow \R^{2d}$ be a strong $SG$ diffeomorphism. Set $f = g \circ \iota \: \R^d \to \R^{2d}$. Suppose that for some $u \in \Sw'$ we have 
            \begin{align}
            \label{eq:normal-bundle-hypothesis1}
                \cWF^s(u) \cap \{(g^{-1})^*(x, 0, 0, \eta) : x \in \R^d, \eta \in \R^d \setminus 0\} = \varnothing \\
            \label{eq:normal-bundle-hypothesis2}
                \oscWF^s(u) \cap \{(g^{-1})^*(x, 0, 0, \eta) : x \in \R^d \setminus 0, \eta \in \R^d \setminus 0\} = \varnothing
            \end{align}
            we have that $\tauWF^s(f^* u) \subset f^* \left( \tauWF^s(u) \right)$ for $\tau \in \{ \mathrm{c}, \F, \mathrm{osc}\}$ and all $s \in \R$.
        \end{theorem}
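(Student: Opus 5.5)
The plan is to factor the pullback as $f^* = \iota^* \circ g^*$ and combine the two preceding results: \Cref{P:diffeo-invariance} for the diffeomorphism part and \Cref{P:WF-trace-inclusion} for the restriction to $\R^d \times \{0\}$. Set $v = g^* u$. This is legitimate because $g$ is a strong SG diffeomorphism, so it is in particular a classical SG-compatible diffeomorphism. We also need $g^*(B^s(\R^{2d})) = B^s(\R^{2d})$. For $B^s = B^s_{\infty,\infty}$ I would verify this directly from the Littlewood--Paley characterization. Composition with a diffeomorphism whose derivatives satisfy the SG bounds $|\partial^\alpha g| \lesssim \jbrak{x}^{1-|\alpha|}$ (so that $Dg$ and $Dg^{-1}$ are bounded) preserves H\"older--Zygmund regularity uniformly. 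For negative $s$ one argues by duality or via the lifting $B^s = \jbrak{D}^{-k}B^{s+k}$ together with the chain rule. Granting this, \Cref{P:diffeo-invariance} gives $\tauWF^s(v) = g^*(\tauWF^s(u))$ for each component $\tau$.

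The next step is to transport the hypotheses. The conditions \eqref{eq:normal-bundle-hypothesis1}--\eqref{eq:normal-bundle-hypothesis2} are stated on the set $(g^{-1})^*\{(x,0,0,\eta)\}$. Applying the diffeomorphism invariance, they become exactly
\[
\cWF^s(v) \cap \{(x,0,0,\eta)\} = \varnothing
\qquad\text{and}\qquad
\oscWF^s(v) \cap \{(x,0,0,\eta) : x \neq 0\} = \varnothing .
\]
These are the hypotheses of \Cref{P:WF-trace-inclusion} with $B = B^s$. That proposition also requires $\iota^* B^s(\R^{2d}) \subset B^s(\R^d)$, and this is where $s>0$ enters. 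For positive $s$, elements of $B^s_{\infty,\infty}$ are bounded continuous functions, and restricting a H\"older--Zygmund function to a hyperplane preserves the Zygmund seminorm. For $s \le 0$ the trace is not defined on the whole space, which is why the positivity assumption appears. The statement's trailing ``all $s\in\R$'' should then be read with this restriction in mind. One could also try to salvage $s \le 0$ by using the microlocal hypothesis to avoid conormal frequencies, but I would not pursue this.

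With these inputs, \Cref{P:WF-trace-inclusion} shows that $\iota^* v = f^* u$ is well defined and that $\tauWF^s(\iota^* v) \subset \iota^*(\tauWF^s(v)) = \iota^* g^*(\tauWF^s(u))$. It remains to identify the composite pullback of covector sets, $\iota^* \circ g^* = (g\circ\iota)^* = f^*$. For the classical component this is the usual functoriality of $(x,\xi)\mapsto (f(x), {}^t Df(x)^{-1}\ldots)$. For the $\F$ and $\mathrm{osc}$ components, where points live at infinity in $x$, I would use the homogeneous leading term $g_1$ of the classical expansion. The pullback at infinity is governed by $g_1$, and functoriality follows from the chain rule applied to the principal parts.

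I expect the main obstacle to be the two function-space compatibilities rather than the microlocal bookkeeping: the invariance $g^*B^s = B^s$, which is needed globally with uniform constants at infinity, and the trace inclusion. The first deserves the most care. I would prove it first for $0<s<1$ by difference quotients, using the uniform bi-Lipschitz bounds that follow from the SG estimates. I would then extend to larger $s$ by induction on derivatives, since $\partial(u\circ g) = (Du\circ g)\,Dg$ and $Dg \in \scS^{0}$ is a bounded multiplier on $B^{s-1}$.
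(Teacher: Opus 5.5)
Your proposal is correct and follows the paper's proof essentially verbatim. You factor $f^* = \iota^*\circ g^*$, verify $g^*B^s = B^s$ and $\iota^*B^s(\R^{2d})\subset B^s(\R^d)$ (the latter being where $s>0$ enters), use \Cref{P:diffeo-invariance} to transport the normal-bundle hypotheses into those of \Cref{P:WF-trace-inclusion}, and conclude with $\iota^* g^* = f^*$. The only difference is that the paper cites Triebel for the diffeomorphism invariance of $B^s$ and Schneider's Besov hyperplane trace theorem instead of the direct H\"older--Zygmund arguments you sketch; note that at integer $s$ your derivative induction does not close on its own and would need, e.g., real interpolation.
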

        \begin{proof}
            We first note that the condition of $g$ being a classical SG-diffeomorphism implies that it has bounded derivatives of all orders and has Jacobian bounded away from $0$. Thus, by \cite[Theorem 4.3]{triebelTheoryFunctionSpaces1992}, $g^*(B) = B$. Additionally, by repeated application of the Besov hyperplane trace theorem \cite[Theorem 3.2]{schneiderTraceOperatorsBesov2010}, we know that $\iota^*(B^s(\R^{2d})) \subset B^s(\R^d)$. Thus, the SG space hypotheses of \Cref{P:WF-trace-inclusion,P:diffeo-invariance} are satisfied.
            By \Cref{P:diffeo-invariance}, $\tauWF^s(g^* u) = g^* \tauWF^s(u)$. Then the hypotheses \eqref{eq:normal-bundle-hypothesis1} and \eqref{eq:normal-bundle-hypothesis2} exactly imply the corresponding assumptions in \Cref{P:WF-trace-inclusion}. Thus, $\WF^s (\iota^* g^* u) \subset \iota^* \WF^s(g^* u) = \iota^* g^* \WF^s(u) = f^* \WF^s(u)$.
        \end{proof}

        \begin{corollary}
            \label[corollary]{C:diagonal-pullback}
            For the diagonal map $\Delta(x) = (x, x)$, if we take $s > 0$ and some $u \in \Sw'(\R^{2d})$ such that
            \begin{align}
                \cWF^s(u) \cap \{(x, x, -\eta, \eta) : x \in \R^d, \eta \in \R^d \setminus \{0\}\} = \varnothing \label{E:int1} \\
                \oscWF^s(u) \cap \{(x, x, -\eta, \eta) : x \in \R^d \setminus \{0\}, \eta \in \R^d \setminus \{0\}\} = \varnothing, \label{E:int2}
            \end{align}
            then $\Delta^* u \in \Sw'(\R^d)$ is defined and
            \begin{align}
                \scWF^s(\Delta^* u) \subset \Delta^* \scWF^s(u).
            \end{align}
        \end{corollary}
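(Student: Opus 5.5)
The corollary should follow from \Cref{T:pullback} once the diagonal map is written in the form $\Delta = g \circ \iota$ for a suitable strong SG diffeomorphism $g$ of $\R^{2d}$. The natural choice is the linear map
\begin{equation*}
    g(x, y) = (x, x + y), \qquad g^{-1}(z, w) = (z, w - z),
\end{equation*}
so that $g(\iota(x)) = g(x,0) = (x,x) = \Delta(x)$. Since $g$ is linear and invertible, it is a classical SG-compatible diffeomorphism. Its derivatives are bounded (in fact constant), its Jacobian is $1$, it is exactly homogeneous of degree one, and the ball condition holds with $U = U' = V = V' = \R^{2d}$. The invariance $g^\ast(B^s) = B^s$ then follows as in the proof of \Cref{T:pullback}.

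The main computation is to identify the sets $\{(g^{-1})^\ast(x,0,0,\eta)\}$ appearing in \eqref{eq:normal-bundle-hypothesis1}--\eqref{eq:normal-bundle-hypothesis2} with the conormal set $\{(x,x,-\eta,\eta)\}$ of the diagonal.

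For a linear diffeomorphism $A$, the cotangent lift sends $(p, \zeta) \mapsto (A p, (A^{-1})^T \zeta)$. I would verify which direction of the lift the paper's pullback notation corresponds to by checking it against \Cref{P:diffeo-invariance}. The point $(x,0)$ is mapped to $g(x,0) = (x,x)$. The covector $(0,\eta)$, which is conormal to $\R^d\times\{0\}$, is sent to $(g^{-1})^{T}(0,\eta)$. Since $g^{-1}$ has matrix $\begin{pmatrix} I & 0 \\ -I & I\end{pmatrix}$, its transpose applied to $(0,\eta)$ gives $(-\eta, \eta)$. Thus the transported set is exactly $\{(x,x,-\eta,\eta)\}$, which is the conormal bundle of the diagonal. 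The ranges of $x$ and $\eta$ match as well: $x \in \R^d$ in the classical case, $x \neq 0$ in the oscillating case, and $\eta \ne 0$ in both. The oscillating case additionally uses the fact that a linear map acts on the directions at infinity, $S^{2d-1}$, by normalization, so the conic sets correspond.

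Hypotheses \eqref{E:int1} and \eqref{E:int2} are therefore precisely the hypotheses of \Cref{T:pullback}. That theorem gives $\tauWF^s(\Delta^\ast u) \subset \Delta^\ast \tauWF^s(u)$ for $\tau\in\{\mathrm{c},\F,\mathrm{osc}\}$. Taking the union over $\tau$ yields the inclusion for $\scWF^s$, using $\Delta^\ast(A\cup B) = \Delta^\ast A \cup \Delta^\ast B$.

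The remaining claim is that $\Delta^\ast u$ is defined as a tempered distribution. By \Cref{P:WF-trace-inclusion}, applied to $g^\ast u$, $\iota^\ast(g^\ast u)$ is defined under the transported hypotheses. We then set $\Delta^\ast u := \iota^\ast g^\ast u$, which agrees with the composition of pullbacks on Schwartz functions.

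The step I expect to require the most care is the bookkeeping of the covector transformation. The conventions must be fixed so that the sign $(-\eta,\eta)$ rather than $(\eta,\eta)$ comes out. The identification in the oscillating component also needs care, because there the "point" $x$ lives at infinity and is only defined up to positive scaling.
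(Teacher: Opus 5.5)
Your proposal is correct and follows the paper's route. The paper also deduces the corollary from \Cref{T:pullback} by factoring $\Delta = g\circ\iota$ with a linear $g$, but it takes the scaled rotation $g(x,y)=(x-y,x+y)$ instead of your shear $g(x,y)=(x,x+y)$; for that choice $(g^{-1})^{T}=\tfrac12 g$, so the covector bookkeeping you flag as delicate becomes immediate (giving $\tfrac12(-\eta,\eta)$), whereas your transpose-inverse computation producing $(-\eta,\eta)$ is equally valid.
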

        \begin{proof}
            This follows from taking $g$ to be the rotation $g(x, y) = (x-y, x+y)$.
        \end{proof}
        \begin{remark}
            Here we recognize the classical diagonal $\cNDelta$ as the second set in \eqref{E:int1}, and we define $\oscNDelta$ to be the second set in \eqref{E:int2}; we refer to this as the \emph{diagonal at infinity} in the sequel. We also note that no conditions on $\FWF$ ever need to be checked to pull back.
        \end{remark}
    \subsection{Tensor Products}
    We begin by defining the product of two distributions in the sense of H\"ormander:
    \begin{definition}
        Let $u, v \in \Sw'(\R^d)$, and let $\Delta \: \R^d \hookrightarrow \R^d \times \R^d$ be the diagonal map. We formally define  the product $uv$ by 
        \begin{equation*}
            uv = \Delta^\ast (u \otimes v).
        \end{equation*}
        Here, the asterisk denotes pullback, in the usual fashion.
        If $u, v$ are Schwartz functions, this coincides with the pointwise product.
    \end{definition}
    We turn to the tensor product property of the Besov wavefront sets, which is analogous to the classical result of \cite[Lemma 11.6.3]{hormanderLecturesNonlinearHyperbolic1997} for the Sobolev wavefront set. The local version of this result was first proved in \cite{dappiaggi_besov_2023}. To the best of our knowledge this global result is new.
    
    \begin{proposition}\label[proposition]{P:tensorprod}
        Let $f \in \Sw'(\R^n)$ and $g \in \Sw'(\R^m)$. Let $s, t\in \R$. Then for any $r < \min\{s, t, s+t\}$ and $\tau \in \{\rm{c}, \rm{osc}\}$ we have the following inclusions of $\tauWF(f\otimes g)$:
            \begin{equation}\label{E:cwftensor}
                \cWF^{r}(f \otimes g) \subset \cWF_0^{s}(f) \boxtimes \WF(g) \cup \WF(f) \boxtimes \cWF_0^{t}(g), \qtq{and}
            \end{equation}
            \begin{equation}\label{E:oscwftensor}
                \oscWF^{r}(f \otimes g) \subset \oscWFt^s(f) \boxtimes \R^{2m} \cup \R^{2n} \boxtimes \oscWFt^t(g).
            \end{equation}
        For the Fourier wavefront set, we have the following inclusion \emph{for any} $r, s, t$:
        \begin{equation}\label{E:FWFtensor}
            \FWF^r(f\otimes g) \subset \FWFt^s(f) \boxtimes \R^{2m} \cup \R^{2n} \boxtimes \FWFt^t(g).
        \end{equation}
    \end{proposition}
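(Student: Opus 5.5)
The plan is to argue each inclusion contrapositively. We take a point outside the right-hand side and build a tensor-product symbol $a = a_1 \otimes a_2$ on $\R^{2(n+m)}$ that is $\tau$-invertible there. Then we check that $\Op(a)(f\otimes g) = \Op(a_1)f \otimes \Op(a_2)g$ lies in $B^r(\R^{n+m})$. The basic analytic input is a \emph{tensor estimate} for Littlewood--Paley pieces. If $F \in B^{s}(\R^n)$ and $G \in B^{t}(\R^m)$, then $F\otimes G \in B^r(\R^{n+m})$ for $r<\min\{s,t,s+t\}$. Likewise, $F \in B^s$ together with $G \in \Sw$ (or $\widehat G$ compactly supported and $G$ rapidly decaying) gives $F\otimes G \in B^{r}$ for $r \le s\wedge 0$ up to an $\epsilon$ loss. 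To prove the estimate, write the $(n+m)$-dimensional block $P_N$ as a sum over pairs $(N_1,N_2)$ with $N_1\vee N_2\sim N$, and bound
\[
\|P_N(F\otimes G)\|_{L^\infty} \lesssim \sum_{N_1\vee N_2\sim N} N_1^{-s}N_2^{-t}\|F\|_{B^s}\|G\|_{B^t}.
\]
This is $\lesssim N^{-\min\{s,t,s+t\}}\log N$, and the logarithm is absorbed by the strict inequality $r<\min\{s,t,s+t\}$. The same computation with $t=+\infty$ (Schwartz $G$) handles the mixed case.

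\emph{Classical part \eqref{E:cwftensor}.} Fix $(x_0,y_0,\xi_0,\eta_0)$ outside the right side, with $(\xi_0,\eta_0)\ne 0$. We split into cases according to whether $\xi_0$ or $\eta_0$ vanishes.

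If $\xi_0,\eta_0\neq0$, the point avoids, say, $\cWF^s(f)\boxtimes \WF(g)$ and $\WF(f)\boxtimes\cWF^t(g)$. Then either both factors are $B^s$/$B^t$-regular there, or one factor is $C^\infty$-regular. We choose cutoffs $\phi_1\otimes\psi_1$ at $(x_0,\xi_0)$ and $\phi_2\otimes\psi_2$ at $(y_0,\eta_0)$ making $\Op(\phi_1\otimes\psi_1)f$ and $\Op(\phi_2\otimes\psi_2)g$ belong to the right spaces. A directional cutoff $\psi$ on $\R^{n+m}$ near $(\xi_0,\eta_0)$ with $\psi\cdot(\psi_1\otimes\psi_2) = \psi$ at large frequency can be arranged because both components are nonzero on a small cone. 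We then apply \Cref{L:microlocality_and_microellipticity} to $\Op((\phi_1\otimes\phi_2)\otimes\psi)$ acting on $\Op(\phi_1\otimes\psi_1)f\otimes\Op(\phi_2\otimes\psi_2)g$, plus errors. The errors are Schwartz by the support-separation lemma of the appendix.

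If $\eta_0=0$, then $(y_0,0)\notin \supp g\times\{0\}$ or the $f$-factor is regular. When $y_0\notin\supp g$, a spatial cutoff kills $g$ and we are done. Otherwise, in the cone around $(\xi_0,0)$ the $\eta$-frequency is $\lesssim\epsilon|\xi|$. We decompose $\phi_2 g$ into low frequencies $|\eta|\lesssim\epsilon|\xi|$ and use $f\in B^s$ microlocally together with $\phi_2 g\in B^{t}$ locally; this is where the enlargement $\cWF_0$ enters. The case $\xi_0 = 0$ is symmetric.

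\emph{Fourier and oscillating parts \eqref{E:FWFtensor}, \eqref{E:oscwftensor}.} These follow the same scheme, now with directional cutoffs in $(x,y)$ at infinity. A conical neighbourhood of $(x_0,y_0)$ with $x_0,y_0\ne0$ contains a product of cones, so the product cutoff is elliptic. The degenerate directions $x_0=0$ or $y_0=0$ (respectively $\xi_0=0$ or $\eta_0=0$) are exactly what the enlargements $\FWFt$ and $\oscWFt$ account for. There the corresponding factor is controlled by a pure Fourier cutoff $\vphi(D)$ (respectively a pure spatial cutoff), as in the remark after the enlargement definitions, and the other factor is handled by a compactness argument covering the compact set of directions, as in \Cref{P:WF-trace-inclusion}.

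For $\FWF$ no loss occurs, because $\FWF^s$ is independent of $s$ (see the remark above). Both factors can therefore be taken Schwartz after localization. Compact Fourier support and tensorization then preserve the Schwartz class, which gives the inclusion for any $r$.

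The main obstacle is the degenerate directions: $\eta_0=0$ in the classical case and the analogous $0$-components in the oscillating case. There the product cone does not factor into cones in each variable, so the mixed low/high frequency interaction must be estimated by hand via the dyadic sum above. It is exactly this sum that forces the restriction $r<\min\{s,t,s+t\}$.
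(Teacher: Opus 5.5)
Your overall scheme matches the paper's. Both use the global dyadic estimate $B^s\otimes B^t\subset B^r$ for $r<\min\{s,t,s+t\}$, a case split on whether $\xi_0$ or $\eta_0$ vanishes, and tensorized cutoffs with errors controlled by \Cref{L:sepsupps}. Your treatment of $\FWF$ and $\oscWF$ is fine, because lying outside those right-hand sides controls \emph{both} factors. The gap is in the classical part, in the cases produced by the smooth wavefront set $\WF(\cdot)$ and the enlargement $\supp\times\{0\}$. Take $\eta_0=0$ and $y_0\in\supp g$, so $(y_0,0)\in\cWF_0^t(g)$. Avoiding $\WF(f)\boxtimes\cWF_0^t(g)$ then forces $(x_0,\xi_0)\notin\WF(f)$. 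Avoiding $\cWF_0^s(f)\boxtimes\WF(g)$ is automatic, since $(y_0,0)$ is on the zero section. So nothing at all is known about $g$ near $y_0$, and your input ``$\phi_2 g\in B^t$ locally'' is not available. The input you use for $f$, membership in $B^s$ microlocally, is also too weak: with only $\|P_{N_1}F\|_{L^\infty}\lesssim N_1^{-s}$ and $\phi_2 g\in B^{-M}$ for some possibly huge $M$, your cone sum gives only $B^{s-M}$. The same defect appears in your nondegenerate case when ``one factor is $C^\infty$-regular''. The other factor is then an arbitrary tempered distribution, and your mixed estimate ($F\in B^s$, $G\in\Sw$ gives $B^r$ for $r<s\wedge 0$) only returns $B^{-M-\epsilon}$.

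The missing idea is to play the full $C^\infty$ regularity of the good factor against the finite order of the bad one, \emph{inside the cone} selected by the outer cutoff $\Psi$. For $\eta_0=0$, set $F=\Op(\phi_1\otimes\psi_1)f\in\Sw$, so $\|P_{N_1}F\|_{L^\infty}\lesssim_K N_1^{-K}$ for every $K$. The compactly supported $\phi_2 g$ has finite order: $\|P_{N_2}(\phi_2 g)\|_{L^\infty}\lesssim N_2^{M}$. On $\supp\Psi\subset\{|\eta|\le\epsilon|\xi|\}$ only pairs with $N_1\sim N$ and $N_2\lesssim\epsilon N$ contribute. Hence $\|P_N\Psi(D)(F\otimes\phi_2 g)\|_{L^\infty}\lesssim_K N^{M-K}$, and the localized product lies in every $B^r$. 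In the nondegenerate mixed case the cone $\{|\xi|\sim|\eta|\}$ forces $N_1\sim N_2\sim N$, with the same conclusion. This is the mechanism behind the paper's case $\xi_0=0$. There it records that $(y_0,\eta_0)\notin\WF(g)$ gives $\vphi_{y_0}\psi_{\eta_0}(D_y)g\in\Sw$, then tensorizes with a conic cutoff, imposing no Besov condition on $f$. The paper leaves the nondegenerate mixed subcase implicit, but the same argument covers it. So the role of the enlargement $\cWF_0$ is to force $C^\infty$-regularity of the \emph{other} factor. Your closing remark is also inaccurate: these cases give microlocal smoothness, and the restriction $r<\min\{s,t,s+t\}$ comes only from the global estimate used when both factors are merely $B^s$/$B^t$-regular.
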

    \begin{proof}
        We begin by proving a Besov estimate for tensor products. To estimate the Besov norm of $f\otimes g$, we look at
        \begin{equation*}
            P_{N}^{x,y} \sum_{N_1, N_2} \left(P_{N_1}^x f \otimes P_{N_2}^y g\right)(x,y).
        \end{equation*}
        Here, $P_N^{x,y}$ has symbol $\varphi_{N}(\xi, \eta)$, which has support where $|\xi| + |\eta| \sim N$. By equivalence of norms in finite dimensions, this means that $\max\{|\xi|, |\eta|\} \sim N$ (after potentially changing the constant by a fixed factor). In particular, the sum is nonzero precisely when $\max\{ N_1, N_2\} \sim N$. But then 
        \begin{equation*}
            \|P_{N}^{x,y}(f\otimes g)\|_{L^\infty(\R^{n+m})} \lesssim \sum_{\max\{N_1, N_2\} \sim N} \|P_{N_1}f \|_{L^\infty(\R^n)}\|P_{N_2}g\|_{L^\infty(\R^m)}.
        \end{equation*}
        By estimating the $L^\infty$ norms under the sum by $N_1^{-s} N_2^{-t} \|f\|_{s}\|g\|_{t}$ we may continue the estimate by 
        \begin{equation*}
            \|P_{N}^{x,y}(f \otimes g)\|_{L^{\infty}(\R^{n+m})} \lesssim \left(N^{-(\max\{s, 0\} + t)} + N^{-(\max\{t, 0\} + s)}\right)\|f\|_{s}\|g\|_{t}.
        \end{equation*}
        But the term in parentheses is bounded by $N^{-\min\{s, t, s+t\}}$, which can be shown by a simple case analysis, which proves that if $f \in B^s(\R^n)$, $g \in B^t(\R^m)$, then the tensor product $f \otimes g \in B^r(\R^{n+m})$ for every $r < \min \{s, t, s+t\}$. 
        We now turn to the proof of the proposition. 
        \paragraph{\textbf{Analysis of $\cWF$}} For the first claim about the classical wavefront set, we take a point $(x_0, y_0; \xi_0, \eta_0)$ which does not belong to the right-hand side of \eqref{E:cwftensor}. We then subdivide into cases, as the only restriction is that $(\xi_0, \eta_0) \neq (0,0)$. We always assume that $x_0 \in \supp(f)$ and $y_0 \in \supp(g)$, otherwise the conclusion is trivial. 

        For the first case, we assume that $\xi_0\neq 0$ and $\eta_0 \neq 0$ and that neither $f$ nor $g$ is microlocally smooth at $(x_0, \xi_0), (y_0, \eta_0)$, respectively. Then nonmembership in the right-hand side of \eqref{E:cwftensor} means that both 
        \begin{equation*}
            (x_0, \xi_0) \notin \cWF_0^s(f) \qtq{and} (y_0, \eta_0) \notin \cWF_0^t(g)
        \end{equation*}
        simultaneously. But then we can find cutoffs $\vphi_{x_0}, \vphi_{y_0}$ to neighborhoods of $x_0, y_0$ and directional cutoffs $\psi_{\xi_0}$, $\psi_{\eta_0}$ so that 
        \begin{equation*}
            \vphi_{x_0}\psi_{\xi_0}(D_x) u \in B^s \qtq{and} \vphi_{y_0} \psi_{\eta_0}(D_y) v \in B^t.
        \end{equation*}
        We can now find a directional cutoff $\Psi(\xi, \eta)$ supported where both directional cutoffs $\psi_\xi, \psi_\eta$ are identically $1$; taking the tensorized cutoff $\vphi_{(x,y)} = \phi_x \otimes \vphi_y$ and using \Cref{L:sepsupps}, we find 
        \begin{equation*}
            \vphi_{(x,y)} \Psi(D_x, D_y) (f\otimes g) = \Psi(D_x,D_y) [\vphi_x \psi_{\xi}f \otimes \vphi_y \psi_\eta g] + \text{Schwartz},
        \end{equation*}
        which is in $B^r$ for $r < \min\{s, t, s+t\}$ by the Littlewood-Paley estimate above.

        By symmetry it next suffices to consider the case $\xi_0 = 0, \eta_0 \neq 0$. For this to happen, given $x_0 \in \supp(f)$, we must have $(y_0, \eta_0) \notin \WF(g)$. But now this means that we can find cutoffs $\vphi_{y_0}, \psi_{\eta_0}(D_y)$ so that $\vphi_{y_0}\psi_{\eta_0}(D_y)g \in \Sw(\R^m)$; tensorizing and using a similar cutoff $\Psi(D_x, D_y)$ as above yields the claim. 
        \paragraph{\textbf{Analysis of $\FWF$}}
        If we take a point $(x_0, y_0, \xi_0, \eta_0)$ which does not belong to the right-hand side of \eqref{E:FWFtensor} then we can find appropriate cutoffs $\vphi_{\xi_0}(D_x), \vphi_{\eta_0}(D_y)$ and directional cutoffs $\psi_{x_0}, \psi_{y_0}$ so that 
        \begin{equation*}
            \psi_{x_0}\vphi_{\xi_0}(D_x)f \in C_b^\infty(\R^n) \qtq{and} \psi_{y_0}\vphi_{\eta_0}(D_y)g \in C_b^\infty(\R^m).
        \end{equation*}
        But then tensorizing the frequency cutoffs and defining $\Psi$ to be a directional cutoff to a conical neighborhood where $\psi_{x_0}, \psi_{y_0}$ are both identically $1$, we obtain the result in the case that $x_0, y_0 \neq 0$. If either (but not both) is zero, we simply replace its directional cutoff with $1$ and repeat the above analysis. 
        \paragraph{\textbf{Analysis of $\oscWF$}} We begin again by taking a point $(x_0, y_0, \xi_0, \eta_0)$ outside the right-hand side of \eqref{E:oscwftensor}. A virtually identical argument to the cases above supplies four directional cutoffs $\psi_{x_0}, \psi_{\xi_0}(D_x)$, $\psi_{y_0}, \psi_{\eta_0}(D_y)$ so that both 
        \begin{equation*}
            \psi_{x_0}\psi_{\xi_0}(D_x)f \in B^s(\R^n)\qtq{and} \psi_{y_0}\psi_{\eta_0}(D_y)g \in B^t(\R^m)
        \end{equation*}
        both hold. We remark that in the edge cases where $x_0 = 0$, we take $\psi_{x_0} \equiv 1$; if $\xi_0 = 0$, we similarly take $\psi_{\xi_0} \equiv 1$, with similar conventions for $y_0, \eta_0$. We also note that in the case where $(x_0, \xi_0) = (0,0)$ we automatically assume that $f \in B^s(\R^n)$. But now rerunning the same arguments, choosing encompassing directional cutoffs $\Psi_{(x_0, y_0)}(x,y)$ and $\Psi_{(\xi_0,\eta_0)}(D_x, D_y)$, we arrive at the expression 
        \begin{equation*}
            \Psi_{(x_0, y_0)}\Psi_{(\xi_0, \eta_0)}(D_x, D_y) (f\otimes g) = \Psi_{(x_0, y_0)} \Psi_{(\xi_0, \eta_0)}(D_x, D_y) \left[\psi_{x_0}\psi_{\xi_0}(D_x)f \otimes \psi_{y_0}\psi_{\eta_0}(D_y)g\right].
        \end{equation*}
        The expression in brackets is an element of $B^r(\R^m \times \R^n), r < \min\{s, t, s+t\}$ by caveat, and the operators on the outside are bounded on every Besov space, completing the proof. 
    \end{proof}
    \section{Paraproducts and H\"ormander's Condition}\label{S:paraproducts}
        In this section, we use the tools from \Cref{S:Properties} to prove our main theorem. We begin with a short technical lemma, which involves intersecting the wavefront sets with the normal to the diagonal $N_\Delta$ and the diagonal at infinity $N_\Delta^\infty$. 
        \begin{lemma}\label[lemma]{L:refined_tensor_est}
            Let $r, s, t \in \R$. Then we have the following refinement of the tensor product containments above: 
            \begin{align}
                \cWF^r(f \otimes g) \cap \cNDelta &\subset \left[ \cWF^s(f) \boxtimes \WF(g) \cup \WF(f) \boxtimes \cWF^t(g)\right] \cap N_\Delta \label{E:refcwf}\\
                \oscWF^r(f \otimes g) \cap \oscNDelta & \subset[ \oscWF^s(f) \boxtimes \R^{2d} \cup \R^{2d} \boxtimes \oscWF^t(g)] \cap \oscNDelta\label{E:refoscwf}
            \end{align}
            for any $r < s+t$.
        \end{lemma}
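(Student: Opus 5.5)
The plan is to rerun the proof of \Cref{P:tensorprod}, but only at points of $\cNDelta$ and $\oscNDelta$. At such points both frequency components are nonzero, so the degenerate cases disappear. Then we replace the crude bound $N^{-\min\{s,t,s+t\}}$ by a bound adapted to frequencies where the two factors have comparable size.

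\textbf{Classical component.} Take a point $(x_0,x_0,-\eta_0,\eta_0)\in\cNDelta$, with $\eta_0\neq 0$, lying outside the right-hand side of \eqref{E:refcwf}. Since both frequency components are nonzero, the enlargement $\supp f\times\{0\}$ in $\cWF_0$ plays no role; this is why $\cWF_0$ may be replaced by $\cWF$.

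There are two cases.
\begin{itemize}
\item If one factor is microlocally Schwartz at its point, we argue exactly as in the second case of the proof of \Cref{P:tensorprod}. We obtain a localized piece of $f\otimes g$ that lies in every $B^r$.
\item Otherwise we have $(x_0,-\eta_0)\notin\cWF^s(f)$ and $(x_0,\eta_0)\notin\cWF^t(g)$. We choose cutoffs $\vphi_{x_0}$, $\psi_{-\eta_0}(D)$, $\vphi'_{x_0}$, $\psi_{\eta_0}(D)$ so that $F=\vphi_{x_0}\psi_{-\eta_0}(D)f\in B^s$ and $G=\vphi'_{x_0}\psi_{\eta_0}(D)g\in B^t$. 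As before, \Cref{L:sepsupps} lets us write $\Phi\,\Psi(D_x,D_y)(f\otimes g)=\Phi\,\Psi(D_x,D_y)(F\otimes G)+\text{Schwartz}$.
\end{itemize}

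The new point is the choice of $\Psi$. We take it to be a directional cutoff to a narrow cone around $(-\eta_0,\eta_0)$. Every $(\xi,\eta)$ in such a cone with $|(\xi,\eta)|$ large has $|\xi|\sim|\eta|\sim|(\xi,\eta)|$.

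\textbf{Littlewood--Paley estimate.} We compute $P_N^{x,y}\Psi(D)(F\otimes G)$ as in \Cref{P:tensorprod}. Now only the pairs $N_1\approx N_2\approx N$ contribute, and there are boundedly many of them. The product bound therefore gives
\begin{equation*}
\|P_N^{x,y}\Psi(D)(F\otimes G)\|_{L^\infty}\lesssim N^{-(s+t)}\|F\|_{B^s}\|G\|_{B^t}.
\end{equation*}
Two points need care here.
\begin{itemize}
\item $\Psi(D)$ does not commute exactly with the dyadic pieces. We handle this by inserting $\sum_{N_1,N_2}P_{N_1}^x\otimes P_{N_2}^y$ and using that $\Psi\cdot(\vphi_{N_1}\otimes\vphi_{N_2})$ vanishes unless $N_1\sim N_2$. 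The kernel of $\Psi(D)$ composed with $P_N^{x,y}$ is uniformly $L^1$-bounded, since $\Psi$ is a zeroth-order homogeneous symbol away from the origin.
\item The low-frequency part, where $\Psi$ is not homogeneous, is compactly Fourier supported and hence harmless.
\end{itemize}
This gives membership in $B^{s+t}\subset B^r$ for $r<s+t$; in fact $r\le s+t$ suffices here, so the strict inequality is not needed at this step. It follows that the point is not in $\cWF^r(f\otimes g)$.

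\textbf{Oscillating component.} At a point $(\hat x_0,\hat x_0,-\eta_0,\eta_0)\in\oscNDelta$ the argument is identical. The cutoffs $\psi_{x_0}$ are now directional cutoffs in space as well. Because the spatial components are nonzero and equal and the frequency components are nonzero, none of the edge conventions ($\psi\equiv1$ for zero components) are triggered. This is why $\oscWFt$ reduces to $\oscWF$. The frequency cone localization is the same, so the same $N^{-(s+t)}$ bound applies.

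\textbf{Main obstacle.} The step I expect to need the most care is justifying that a narrow conic frequency cutoff kills all non-comparable dyadic pairs modulo a Schwartz (or $B^\infty$) error, uniformly enough to sum in $N$. The first thing I would try is to choose the cone so that its aperture forces $|\xi|/|\eta|\in(1/2,2)$. The interaction $\Psi\cdot(\vphi_{N_1}\otimes\vphi_{N_2})$ then vanishes identically for $N_1\not\approx N_2$ at large frequency, and no error term arises at all.
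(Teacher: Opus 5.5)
Your proposal is correct and is essentially the paper's argument. Both rerun the proof of \Cref{P:tensorprod} at points of $\cNDelta$ and $\oscNDelta$, where the zero components of the enlarged wavefront sets cannot occur, and restrict the outer directional cutoff $\Psi$ to a cone on which $|\xi|\sim|\eta|$; only pairs $N_1\approx N_2\approx N$ then survive, and the bound $N^{-\min\{s,t,s+t\}}$ improves to $N^{-(s+t)}$. Your explicit treatment of the case where one factor is microlocally Schwartz, and your remark that $r=s+t$ is also admissible, are harmless refinements of the paper's sketch. The commutation concern is moot, since $\Psi(D)$, $P_N^{x,y}$ and $P_{N_1}^x\otimes P_{N_2}^y$ are all Fourier multipliers and commute exactly.
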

        \begin{proof}
            We first note that none of the `zero' components of the enlarged wavefront sets in \Cref{P:tensorprod} may enter, owing to the intersections with $\cNDelta$ and $\oscNDelta$. We now make the same argument as in the proof of \Cref{P:tensorprod}, but additionally restrict the support of the outer directional cutoff $\Psi$ to the conical set $\{|\xi| \approx \eta\}$. Then at the stage of the Besov bounds, we have 
            \begin{equation*}
                \begin{split}
                    \| \bbar{\chi}_{(x_0, y_0)} \Psi_{(\xi_0, \eta_0)}(D_x, D_y) [\chi_{x_0} \psi_{\xi_0}(D_x) f\otimes \chi_{y_0}\psi_{\eta_0}(D_y) g] \|_{B^r} \\\lesssim \sup_N N^r \sum_{N_1 \approx N_2 \approx N} \|P_{N_1} \chi_{x_0} \psi_{\xi_0}(D_x) f\|_{L^\infty} \|P_{N_2} \chi_{y_0}\psi_{\eta_0}(D_y) g\|_{L^\infty}
                \end{split}
            \end{equation*}
            where $\chi \in \{\vphi, \psi\}$ and $\bbar{\chi} \in \{\Phi, \Psi\}$ are cutoffs of suitable type. But by hypothesis the terms in the $L^\infty$ norms are in $B^s$, $B^t$, respectively, which gives a bound on the above display by
            \begin{equation*}
                \begin{split}
                     &\sup_N N^r \sum_{N_1 \approx N_2 \approx N} N_1^{-s} N_2^{-t} \|\chi_{x_0} \psi_{\xi_0}(D_x) f\|_{B^s} \|\chi_{y_0}\psi_{\eta_0}(D_y) g\|_{B^t} \\
                     &\lesssim \sup_N N^{r - (s+t)} \|\chi_{x_0} \psi_{\xi_0}(D_x) f\|_{B^s} \|\chi_{y_0}\psi_{\eta_0}(D_y) g\|_{B^t},
                \end{split}
            \end{equation*}
            which is finite as long as $r - (s+t) < 0$, i.e. $r < s+t$.
        \end{proof}
        We now turn to the proof of the main theorem. 
        \begin{proof}[Proof of \Cref{T:maintheorem}]
            We begin by showing item $(3)$.  Take $f, g \in \Sw'(R^d)$; to pull back $\sP^{(2)}(f \otimes g)$ by $\Delta$, we need to show that $\cWF^r( \sP^{(2)}(f\otimes g) \cap \cNDelta = \emptyset$ and similarly that $\oscWF^r(\sP^{(2)}(f\otimes g)) \cap \oscNDelta = \emptyset$. However, this immediately follows from the fact that the symbol of $\sP^{(2)}$ is supported where $\{|\xi| \ll |\eta|\}$, which cannot meet either $\cNDelta$ or $\oscNDelta$; appealing to \Cref{R:settheory} gives the claim.
            
            We turn to the proof of items (1) and (2). We notice that once item (1) is proven, in view of 
            \begin{equation*} \label{E:paraproductexpn}
                fg = \Delta^{\ast}\left(\sP^{(2)}[f\otimes g] + \Pi^{(2)}[f \otimes g] + \sP^{(2)}[g \otimes f]\right),
            \end{equation*}
            we deduce item (2) immediately. To prove (1), we use the same argument as for $\sP$; however, the support of $\Pi^{(2)}  = \{|\xi| \approx |\eta|\}$ may meet $\cNDelta$ or $\oscNDelta$. However, we see that by \Cref{P:micro_paraproducts} and \Cref{L:microlocality_and_microellipticity} we need only to check that 
            \begin{equation*}
                \cWF^r(f \otimes g) \cap \cNDelta = \emptyset \qtq{and} \oscWF^r(f \otimes g) \cap \oscNDelta = \emptyset \qtq{for} r< \alpha+\beta.
            \end{equation*}
            This follows immediately from combining \cref{E:H1,E:H2}, the definitions of $\cNDelta$ and $\oscNDelta$ and \Cref{L:refined_tensor_est}.
            To see the sharpness, we take a Schwartz function $\vphi$ so that $P_1 (\vphi^2) = \vphi^2$, and consider frequencies $N_j = L^j$ for some sufficiently large $L$ so that $N_j \approx N_{j'}$ if and only if $j = j'$. Let $\hat{\xi} \in S^{d-1}$, and consider for $s, t \in \R$
            \begin{equation*}
                f_J(x) = \vphi(x) \sum_{j=1}^J N_j^{-s}e^{i N_j \hat{\xi} \cdot x} \qtq{and} g_J(x) = \vphi(x) \sum_{j= 1}^J N_j^{-t} e^{i N_j \hat{\xi} \cdot x}. 
            \end{equation*}
            One may check that $f_J, g_J$ have limits $f, g$ in $B^s, B^t$, respectively, and $\Pi^{(2)}[f_J \otimes g_J]$ has a limit in $\Sw'(\R^{2d})$. However, after pulling back by $\Delta$, we find 
            \begin{equation*}
                \Pi(f_J, g_J) = \vphi^2 \sum_{j=1}^J N_j^{-(s+t)} \to \infty \qtq{for} s + t \leq 0. 
            \end{equation*}
            Hence no continuous extension of the resonant product may exist for $s+t \leq 0$.
        \end{proof}
    \appendix
    \setcounter{definition}{0}
    \renewcommand{\thelemma}{\Alph{section}.\arabic{lemma}}
    
    \section{Auxiliary Facts}\label[appendix]{S:Appendix}
        In this Appendix, we record some auxiliary technical lemmas. 
        \begin{lemma}\label[lemma]{L:sepsupps}
            Let $a_1, a_2 \in C^\infty(\R^d \to [0, 1])$ with bounded derivatives of all orders and suppose that $b$ is either a frequency cutoff or a directional frequency cutoff. Suppose that $\supp(a_1), \supp(a_2)$ are disjoint in the sense that 
            \begin{equation}
                |z_1 - z_2| \gtrsim (1+ |z_1| + |z_2|) \qtq{for all} z_1 \in \supp(a_1),  z_2 \in \supp(1-a_2). 
            \end{equation}
            Then the operator $a_1 b(D) (1-a_2) \: \Sw'(\R^d) \to \Sw(\R^d)$ is a smoothing operator. 
        \end{lemma}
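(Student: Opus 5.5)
We prove \Cref{L:sepsupps} by writing the operator as an integral operator and showing that its kernel is a Schwartz function on $\R^d\times\R^d$. The operator $T = a_1 b(D)(1-a_2)$ has Schwartz kernel
\begin{equation*}
    K(z_1,z_2) = a_1(z_1)\,\check b(z_1-z_2)\,(1-a_2(z_2)),
\end{equation*}
where $\check b = \F^{-1}b$ is understood as a tempered distribution. Once we know that $K\in\Sw(\R^{2d})$, it follows that $T$ maps $\Sw'(\R^d)$ continuously into $\Sw(\R^d)$, via $Tu(z_1)=\langle u, K(z_1,\cdot)\rangle$ and the continuity of the pairing between $\Sw'$ and $\Sw$ with parameters.

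\textbf{Step 1: decay of $\check b$ away from the origin.} In both cases, $b$ is a symbol of order zero. If $b$ is a frequency cutoff (compactly supported, or equal to $1$ outside a compact set), this is clear. If $b$ is a directional cutoff, it is homogeneous of degree $0$ for $|\xi|\ge R$, so $|\partial^\beta b(\xi)|\lesssim \jbrak{\xi}^{-|\beta|}$. Integrating by parts with $|z|^{-2k}(-\Delta_\xi)^k e^{iz\cdot\xi}$ gives, for $|z|\ge 1$ and every $N$ and $\gamma$,
\begin{equation*}
    |\partial_z^\gamma \check b(z)| \lesssim_{N,\gamma} |z|^{-N}.
\end{equation*}
This is the standard fact that the kernel of an order-zero symbol is smooth off the diagonal and rapidly decreasing there. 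If $b$ has compact support, $\check b$ is already Schwartz and this step is trivial.

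\textbf{Step 2: the separation hypothesis.} On the support of $K$ we have $z_1\in\supp a_1$ and $z_2\in\supp(1-a_2)$, so $|z_1-z_2|\gtrsim 1+|z_1|+|z_2|$. In particular $|z_1-z_2|\ge c>0$, so the singular part of $\check b$ at the origin never enters. Rescaling if necessary so that $|z_1-z_2|\ge 1$, Step 1 gives
\begin{equation*}
    |\partial^\gamma\check b(z_1-z_2)| \lesssim (1+|z_1|+|z_2|)^{-N}
\end{equation*}
on $\supp K$. If $c<1$, we treat the annulus $c\le |z|\le 1$ using smoothness of $\check b$ away from $0$ together with the boundedness of $(z_1,z_2)$ there.

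\textbf{Step 3: derivatives of $K$.} Differentiating $K$ by the Leibniz rule produces derivatives of $a_1$ and $1-a_2$, which are bounded by hypothesis, multiplied by derivatives of $\check b(z_1-z_2)$. Each such product is supported in $\supp a_1\times\supp(1-a_2)$ and is therefore bounded by $(1+|z_1|+|z_2|)^{-N}$ for every $N$. Hence $K\in\Sw(\R^{2d})$.

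The main obstacle is Step 1 in the directional case. There $b$ is not compactly supported, so $\check b$ is a genuinely singular distribution at $0$, and we must justify the off-origin decay carefully. We would do this with a dyadic decomposition $b=\sum_N \vphi_N b$. Each piece satisfies $|\partial^\gamma(\vphi_N b)^\vee(z)|\lesssim N^{d+|\gamma|}(N|z|)^{-M}$. For $|z|\ge c$ and $M> d+|\gamma|+N_0$, these bounds are summable over $N$ and yield the decay $|z|^{-N_0}$.
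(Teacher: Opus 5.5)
Your proposal is correct and follows the same route as the paper. In both, you bound $\check b$ and its derivatives away from the origin by integration by parts. The separation hypothesis then shows that the kernel $a_1(z_1)\,\check b(z_1-z_2)\,(1-a_2(z_2))$ is Schwartz on $\R^{2d}$, which gives the smoothing property.

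Your version is more careful at one point. You restrict the decay estimate to $|z|\ge c$, justified by the dyadic sum. The paper instead states a global bound $|\partial^\alpha K(x)|\lesssim\jbrak{x}^{-M}$, which fails near the origin when $b$ is a directional cutoff.
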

        \begin{proof}
            Let $K$ be the Schwartz kernel of the operator $b$, so that 
            \begin{equation*}
                b(D) f(x) = \int_{\R^d} K(x-y)f(y) \dy \qtq{for all} f \in\Sw(\R^d).
            \end{equation*}
            Then writing $K$ as a Fourier transform and integrating by parts using the hypotheses on $b$, we see that 
            \begin{equation*}
                |\partial^{\alpha} K(x)| \lesssim \jbrak{x}^{-M} \qtq{for all} M \geq 0. 
            \end{equation*}
            From here it is straightforward to see using the conditions on the support of $a_1, a_2$ that $a_1(x) K(x-y)(1-a_2(y))$ is a Schwartz function on $\R^{2d}$, which implies the operator $a_1 b(D)(1-a_2)$ is smoothing.
        \end{proof}

        \begin{lemma}[Further Localization]
            \label[lemma]{L:further-localization}
            Let $B$ be an SG-compatible Banach space, $u \in \D'$, and $a \in \scS^{0,0}$. Then for any $b \in \scS^{0,0}$ with support contained in $\supp a$ such that $d(\supp b, (\supp a)^c) > \gamma > 0$, $\Op(a)u \in B \implies \Op(b)u \in B$.
        \end{lemma}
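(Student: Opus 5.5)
The plan is to factor $\Op(b)$ through $\Op(a)$ modulo a harmless remainder. Concretely, I will construct a symbol $c\in\scS^{0,0}$ and a remainder $r$ so that
\begin{equation*}
  \Op(b) = \Op(c)\,\Op(a) + \Op(r),
\end{equation*}
where $\Op(r)$ maps $\Sw'$ into $\Sw\subset B$. Once this is in hand the lemma follows at once. Indeed, $\Op(a)u\in B$ by hypothesis, and $\Op(c)$ preserves $B$ because $B$ is $\scS$-admissible (for $B=B^s$ this is \Cref{P:scadmissible}). Hence $\Op(b)u\in B+\Sw=B$.

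\textbf{Step 1: the candidate symbol.} Choose a smooth function $\chi$ with two properties: $\chi\equiv 1$ on the $\gamma/3$-neighbourhood of $\supp b$, and $\supp\chi$ lies in the $2\gamma/3$-neighbourhood of $\supp b$. The hypothesis $d(\supp b,(\supp a)^c)>\gamma$ then says that $\supp\chi$ sits at distance at least $\gamma/3$ inside $\supp a$. Mollifying an indicator at scale $\gamma$ gives derivatives of $\chi$ bounded in terms of $\gamma$. To land in the weighted class $\scS^{0,0}$, I would mollify at the scaled widths $\gamma\jbrak{x}$ and $\gamma\jbrak{\xi}$; this matches the $\jbrak{x}^{-|\alpha|}\jbrak{\xi}^{-|\beta|}$ gains, in the same way the cutoffs of \Cref{D:localization} are built. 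I then set $c_0=b/a$ on $\supp\chi$, so that $c_0\chi a=b$. Admissibility then gives $\Op(c_0\chi)\Op(a)u\in B$.

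\textbf{Step 2: symbolic composition.} Next I expand $\Op(c_0\chi)\Op(a)=\Op(c_0\chi\, a)+\Op(r_1)$ using the standard composition formula in the $\scS_{1,1}$ calculus. Each term of the expansion gains one order $\jbrak{x}^{-1}\jbrak{\xi}^{-1}$. Moreover, every correction term involves derivatives of $a$ multiplied against $c_0\chi$, so all of them are supported in $\supp\chi$. I iterate in the usual parametrix fashion: at step $k$ I correct $c$ by $-r_k/a$, again cut off by $\chi$, and then Borel-sum the corrections. This produces $c\sim\sum_k c_k$ with $\Op(c)\Op(a)-\Op(b)\in\Op(\scS^{-\infty,-\infty})$, which is smoothing from $\Sw'$ to $\Sw$. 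Operators in $\Op(\scS^{-\infty,-\infty})$ are handled by the same kernel bound as in \Cref{L:sepsupps}.

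\textbf{Main obstacle.} The step I expect to be the main obstacle is making the division $b/a$ legitimate. The stated hypothesis controls only supports. It does not, on its own face, give a lower bound $|a|\ge c>0$ on $\supp\chi$, and without such a bound $b/a$ need not lie in $\scS^{0,0}$. I would first try to extract that bound from the setting in which the lemma is used. There, $a$ is a product of the cutoffs of \Cref{D:localization}, which are of the form $0\le a\le 1$, and the phrase ``support of $a$'' is effectively the region $\{a\equiv 1\}$ enlarged by a margin. The $\gamma$-separation is exactly what guarantees that the dilated neighbourhood $\supp\chi$ stays inside the plateau where $a$ is bounded below uniformly, for $|x|$ and $|\xi|$ large in the conical directions. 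If that identification holds, Steps 1 and 2 go through verbatim. If it does not, I would fall back on the nonnegativity of $a$: I would replace $a$ by $a+\epsilon(1-\chi)$, which is elliptic on $\supp\chi$. The extra term $\epsilon\Op(1-\chi)u$ then has to be shown to be in $B$ after composing with $\Op(c)$, using the disjoint-support argument of \Cref{L:sepsupps}.
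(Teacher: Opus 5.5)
\textbf{There is a genuine gap here: the ellipticity of $a$ on $\supp b$. You correctly located it, but neither of your repairs closes it, and it cannot be closed, because the statement as written is false.}

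Your strategy is the paper's. You build a microlocal parametrix for $a$ on $\supp b$, so that $\Op(b)$ factors through $\Op(a)$ modulo $\Op(\scS^{-\infty,-\infty})$, and then use boundedness on $B$. You invoke admissibility directly; the paper writes $\Op(b)u\equiv\Op(b)\Op(c)\Op(a)u$ and appeals to \Cref{L:microlocality_and_microellipticity}, which comes to the same thing. The paper's proof opens with ``On $\supp b$, $a$ is elliptic''. It thus silently assumes the uniform lower bound $|a|\geq c>0$ on $\supp b$ that you flagged.

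Here is a counterexample to the statement. Take $\theta\in C_c^\infty(\R^d)$ with $\theta\equiv1$ on $B(0,2)$ and $\supp\theta\subset B(0,3)$, and $\phi\in C_c^\infty(B(0,1))$ with $\phi(0)=1$. Set $a(x,\xi)=|x|^2\theta(x)$, $b(x,\xi)=\phi(x)$ and $u=\delta_0$.
\begin{itemize}
\item Both symbols lie in $\scS^{0,0}$, $\supp a=\supp\theta\times\R^d$, and the separation holds with $\gamma=1/2$.
\item $\Op(a)u=|x|^2\theta\,\delta_0=0\in B^0$.
\item $\Op(b)u=\delta_0\notin B^0$.
\end{itemize}
Interior zeros are not the only problem. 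The symbol $a=\theta(x)\jbrak{\xi}^{-1}$ has no zeros near $\supp b$, yet $\Op(a)\delta_0\in B^{1-d}$ while $\delta_0\notin B^{1-d}$. So the lower bound must be uniform as $|\xi|\to\infty$; mere non-vanishing is not enough.

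Your fallback fails: $a+\epsilon(1-\chi)$ coincides with $a$ on $\{\chi=1\}$. That set contains a neighbourhood of $\supp b$, which is exactly where you divide by $a$. The modification adds ellipticity only where none is needed.

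The appeal to context is not a derivation either. Separation from $(\supp a)^c$ says nothing about the size of $a$. A cutoff as in \Cref{D:localization} may be arbitrarily small, or even vanish, at points of its support far from the boundary.

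What you actually need is a stronger hypothesis, for example $|a|\geq c>0$ on $\supp b$ (such as $\supp b\subset\{a\equiv1\}$ for the cutoffs used later). Under that hypothesis your Step 2 goes through, and neither $\chi$ nor $\gamma$ is needed:
\begin{itemize}
\item $c_0=b/a\in\scS^{0,0}$ by the quotient rule.
\item Every correction term $\partial_\xi^\alpha c_k\,D_x^\alpha a$ is already supported in $\supp b$, so each $c_{k+1}$ only requires dividing by $a$ there.
\end{itemize}
Dropping $\chi$ is just as well, since the $\chi$ of your Step 1 is in general not in $\scS^{0,0}$. When $\supp b$ is unbounded (for instance conical), a transition from $1$ to $0$ across the fixed width $\gamma/3$ forces derivatives of size $\sim\gamma^{-1}$ at arbitrarily large $|x|$ or $|\xi|$. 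That violates the $\jbrak{x}^{-1}$, $\jbrak{\xi}^{-1}$ bounds. Mollifying at widths $\gamma\jbrak{x}$, $\gamma\jbrak{\xi}$ instead pushes $\supp\chi$ out of the $2\gamma/3$-neighbourhood of $\supp b$.
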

        \begin{proof}
            On $\supp b$, $a$ is elliptic. Thus, we can construct a parametrix $c \in \scS^{0,0}$ such that the composition $\Op(c) \Op(a) = I$ up to an infinitely smoothing remainder. But then, $\Op(b)u = \Op(b) \Op(c) \Op(a) u$ up to a Schwartz error. The hypothesis, combined with \Cref{L:microlocality_and_microellipticity}, tell us that $\scWF(\Op(b)\Op(c) \Op(a) u)$ is contained in $\scWF(\Op(a)u) = \varnothing$.
        \end{proof}
    \bibliography{references}
    \bibliographystyle{bjoern_style}
\end{document}